\newcommand {\bmo}{\mathrm{bmo}}
\newcommand {\ud}{\mathrm{d}}
\newcommand {\veps}{\varepsilon}
\newcommand {\F}{\mathcal{F}}
\newcommand {\HT}{\mathcal{H}}
\newcommand {\Hp}{\mathcal{H}^{p}_{FIO}(\Rn)}
\newcommand {\Hps}{\mathcal{H}^{s,p}_{FIO}(\Rn)}
\newcommand {\rb}{\rangle}
\newcommand {\lb}{{\langle}}
\newcommand {\La}{{\mathcal{L}}}
\newcommand {\loc}{{\mathrm{loc}}}
\newcommand {\N}{{{\mathbb N}}}
\newcommand {\ph}{\varphi}
\newcommand {\R}{\mathbb R}
\newcommand {\Rn}{\mathbb{R}^{n}}
\newcommand {\rank}{\mathrm{rank}}
\newcommand {\supp}{\mathrm{supp}}
\newcommand {\Sp}{S^{*}\Rn}
\newcommand {\Sw}{\mathcal{S}}
\newcommand {\Tp}{T^{*}\Rn}
\newcommand {\w}{\omega}
\newcommand {\Z}{{\mathbb Z}}
\newcommand {\vanish}[1]{\relax}
\newcommand{\wh}{\widehat}
\DeclareFontFamily{U}{mathx}{\hyphenchar\font45}
\DeclareFontShape{U}{mathx}{m}{n}{
      <5> <6> <7> <8> <9> <10>
      <10.95> <12> <14.4> <17.28> <20.74> <24.88>
      mathx10
      }{}
\DeclareSymbolFont{mathx}{U}{mathx}{m}{n}
\DeclareMathAccent{\widecheck}{0}{mathx}{"71}
\newtheorem{theorem}{Theorem}[section]
\newtheorem{lemma}[theorem]{Lemma}
\newtheorem{proposition}[theorem]{Proposition}
\newtheorem{corollary}[theorem]{Corollary}
\theoremstyle{definition}
\newtheorem{definition}[theorem]{Definition}
\newtheorem{remark}[theorem]{Remark}
\numberwithin{equation}{section}
\protected\def\ignorethis#1\endignorethis{}
\let\endignorethis\relax
\title[Local smoothing and Hardy spaces for FIOs]{Local smoothing and Hardy spaces for Fourier integral operators}
\author{Jan Rozendaal}
\address{Institute of Mathematics, Polish Academy of Sciences\\
ul.~\'{S}niadeckich 8\\
00-656 Warsaw\\
Poland}
\email{jrozendaal@impan.pl}
\keywords{Local smoothing, wave equation, Hardy space for Fourier integral operators}
\subjclass[2020]{Primary 42B37. Secondary 35L05, 42B35, 35S30.}
\thanks{This research was supported by grant DP160100941 of the Australian Research Council, and by NCN grant UMO2017/27/B/ST1/00078. The research leading to these results has received funding from the Norwegian Financial Mechanism 2014-2021, grant 2020/37/K/ST1/02765.}
\begin{document}

\begin{abstract}
We show that the Hardy spaces for Fourier integral operators form natural spaces of initial data when applying $\ell^{p}$-decoupling inequalities to local smoothing for the wave equation. This yields new local smoothing estimates which, in a quantified manner, improve the bounds in the local smoothing conjecture on $\Rn$ for $p\geq 2(n+1)/(n-1)$, and complement them for $2<p<2(n+1)/(n-1)$. These estimates are invariant under application of Fourier integral operators, and they are essentially sharp.
\end{abstract}

\maketitle

\section{Introduction}

This article concerns the local smoothing conjecture for the Euclidean wave equation on $\Rn$. The phenomenon of local smoothing revolves around determining, for each $1<p<\infty$, the minimal $s\in\R$ for which there exists a $C\geq0$ such that
\begin{equation}\label{eq:conj}
\Big(\int_{0}^{1}\|e^{it\sqrt{-\Delta}}f\|_{L^{p}(\Rn)}^{p}\ud t\Big)^{1/p}\leq C\|f\|_{W^{s,p}(\Rn)}
\end{equation}
for all $f\in W^{s,p}(\Rn)$. The optimal fixed-time estimates for the half-wave propagator $e^{it\sqrt{-\Delta}}$ from \cite{Peral80,Miyachi80a} imply that one may choose $s=2s(p)$, where we write
\[
s(p):=\frac{n-1}{2}\Big|\frac{1}{2}-\frac{1}{p}\Big|
\]
throughout. This estimate is sharp for $1<p\leq 2$, but it can be improved for $p>2$ and $n\geq2$. In fact, the local smoothing conjecture, as originally formulated by Sogge in~\cite{Sogge91}, stipulates that \eqref{eq:conj} should hold with $s=\sigma(p)+\veps$ for each $\veps>0$, where $\sigma(p)=0$ for $2<p\leq 2n/(n-1)$, and $\sigma(p)=2s(p)-1/p$ for $p>2n/(n-1)$. 

Although the local smoothing conjecture is still open in full generality, there are many partial results available, and the conjecture was proved by Guth, Wang and Zhang \cite{GuWaZh20} for $n=2$. For the purposes of the present article, however, it is relevant to highlight the article~\cite{Bourgain-Demeter15} by Bourgain and Demeter. Building on work by Wolff~\cite{Wolff00}, they showed that \eqref{eq:conj} holds with $s=\sigma(p)+\veps$ for $p\geq 2(n+1)/(n-1)$, by proving the $\ell^{p}$-decoupling inequality. More precisely, one has
\begin{equation}\label{eq:conj2}
\Big(\int_{0}^{1}\|e^{it\sqrt{-\Delta}}f\|_{L^{p}(\Rn)}^{p}\ud t\Big)^{1/p}\leq C\|f\|_{W^{d(p)+\veps,p}(\Rn)},
\end{equation}
where
\[
d(p):=\begin{cases}
2s(p)-\frac{1}{p}&\text{if }p\geq \frac{2(n+1)}{n-1},\\
s(p)&\text{if }2\leq p<\frac{2(n+1)}{n-1}.
\end{cases}
\]

\subsection*{Main results}

The local smoothing conjecture is sharp, in the sense that \eqref{eq:conj} does not hold for $s<\sigma(p)$ (see e.g.~\cite[Remark 2.6]{BeHiSo21}). Nonetheless, in this article we obtain improved local smoothing estimates for $p\geq2(n+1)/(n-1)$, by working with a different space of initial data in \eqref{eq:conj2}. Our main result, formulated in terms of the Sobolev spaces $\HT^{s,p}_{FIO}(\Rn)=\lb D\rb^{-s}\HT^{p}_{FIO}(\Rn)$ over the Hardy spaces for Fourier integral operators $\Hp$ (see Definition \ref{def:HpFIO}), is as follows.

\begin{theorem}\label{thm:localsmooth}
Let $p\in(2,\infty)$ and $\veps>0$. Then there exists a $C>0$ such that 
\begin{equation}\label{eq:localsmooth}
\Big(\int_{0}^{1}\|e^{it\sqrt{-\Delta}}f\|_{L^{p}(\Rn)}^{p}\ud t\Big)^{1/p}\leq C\|f\|_{\HT^{d(p)-s(p)+\veps,p}_{FIO}(\Rn)}
\end{equation}
for all $f\in \HT^{d(p)-s(p)+\veps,p}_{FIO}(\Rn)$.
\end{theorem}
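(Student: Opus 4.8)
The plan is to run the same decoupling argument that yields \eqref{eq:conj2}, but to stop one step short of converting a sectorial square function back into an $L^{p}$-Sobolev norm, and instead to recognise the resulting sectorial quantity as a norm of the initial datum in $\HT^{d(p)-s(p)+\veps,p}_{FIO}(\Rn)$ via the second-dyadic description of $\Hp$. Fix $p\in(2,\infty)$ and a Littlewood--Paley decomposition $f=\sum_{k\geq0}P_{k}f$, with $P_{k}f$ having Fourier support in $\{|\xi|\sim2^{k}\}$ for $k\geq1$ and $P_{0}f$ in $\{|\xi|\lesssim1\}$. The contribution of $P_{0}f$ is routine: after a time cut-off, $e^{it\sqrt{-\Delta}}P_{0}$ is bounded on $L^{p}(\Rn)$ uniformly for $t\in[0,1]$, $P_{0}$ maps $\Hp$ boundedly into $L^{p}(\Rn)$, and $d(p)-s(p)+\veps>0$, so this piece is dominated by $\|f\|_{\HT^{d(p)-s(p)+\veps,p}_{FIO}(\Rn)}$. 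It remains to treat $e^{it\sqrt{-\Delta}}\sum_{k\geq1}P_{k}f$.

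For $k\geq1$ write $P_{k}f=\sum_{\nu}P_{k,\nu}f$, the decomposition into $\sim2^{k(n-1)/2}$ pieces with Fourier support in angular sectors of aperture $\sim2^{-k/2}$ at radius $\sim2^{k}$ that underlies Definition~\ref{def:HpFIO}. Two facts are combined. First, for $t\in[0,1]$ the operator $e^{it\sqrt{-\Delta}}$, restricted to a single such sector, is a translation composed with an operator bounded on $L^{p}(\Rn)$ uniformly in $k$ and $\nu$ (its symbol on a $2^{-k/2}$-sector is, after translating out the linear phase, a symbol of order $0$); hence $\int_{0}^{1}\|e^{it\sqrt{-\Delta}}P_{k,\nu}f\|_{L^{p}(\Rn)}^{p}\,\ud t\lesssim\|P_{k,\nu}f\|_{L^{p}(\Rn)}^{p}$, and likewise for the $\ell^{2}_{\nu}$-valued version. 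Second, after a time cut-off and rescaling by $2^{k}$, the time-frequency support of $\sum_{\nu}e^{it\sqrt{-\Delta}}P_{k,\nu}f$ is a $2^{-k}$-neighbourhood of the truncated light cone in $\R^{n+1}$, partitioned into the rescaled sectors, so the Bourgain--Demeter decoupling inequality for the cone applies on scale $2^{k}$; combining it with the first fact exactly as in the derivation of \eqref{eq:conj2} gives, for each $k\geq1$,
\[
\|e^{it\sqrt{-\Delta}}P_{k}f\|_{L^{p}([0,1]\times\Rn)}\lesssim_{\veps}2^{k\veps}\,2^{k(d(p)-s(p))}\,\mathcal{S}_{k}(f),
\]
where $\mathcal{S}_{k}(f)$ is the sectorial mixed norm of $(P_{k,\nu}f)_{\nu}$ appearing on the right-hand side of decoupling ($\ell^{2}$ over $\nu$ for $2<p<2(n+1)/(n-1)$, $\ell^{p}$ over $\nu$ for $p\geq2(n+1)/(n-1)$). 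The point is that one does \emph{not} estimate $\mathcal{S}_{k}(f)$ by $\|P_{k}f\|_{L^{p}(\Rn)}$ at the further cost of a factor $2^{ks(p)}$ — which is what produces \eqref{eq:conj2} with data in $W^{d(p)+\veps,p}(\Rn)$ — but retains $\mathcal{S}_{k}(f)$ and identifies $\bigl(\sum_{k\geq1}2^{2k(d(p)-s(p)+\veps)}\mathcal{S}_{k}(f)^{2}\bigr)^{1/2}$ with a quantity controlled by $\|f\|_{\HT^{d(p)-s(p)+\veps,p}_{FIO}(\Rn)}$, using the characterization of $\Hp$ in terms of this second-dyadic decomposition. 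Summing in $k$ by Littlewood--Paley and Minkowski's inequality in $\Rn$ (using $p\geq2$), with the exponent $\veps>0$ supplying $\ell^{2}_{k}$-summability, then gives \eqref{eq:localsmooth}.

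The main obstacle is this identification of $\mathcal{S}_{k}(f)$, reassembled over $k$, with $\|f\|_{\HT^{d(p)-s(p)+\veps,p}_{FIO}(\Rn)}$: the decoupling inequality naturally produces a sum over sectors of $L^{p}(\Rn)$-norms of the $P_{k,\nu}f$, whereas the tent-space norm over the cosphere bundle that defines $\Hp$ is an $L^{p}$-norm of an $\ell^{2}$-type \emph{average} over the $\sim2^{k(n-1)/2}$ sectors at frequency $2^{k}$. Matching the two requires, on the one hand, the precise normalisation by the number of sectors that turns a sum into an average — this normalisation is exactly the source of the gain of $s(p)$ derivatives over \eqref{eq:conj2} — and, on the other, an elementary rearrangement between $\ell^{2}$- and $\ell^{p}$-sums over the sectors together with the square-function bounds for angular decompositions available for $p\geq2$, with any $2^{k\veps}$-losses from decoupling and from these bounds absorbed into $\veps$. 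Care must also be taken that the two ranges $2<p<2(n+1)/(n-1)$ (where $d(p)-s(p)=0$ and the $\ell^{2}$-form of cone decoupling is used) and $p\geq2(n+1)/(n-1)$ (where the $\ell^{p}$-form is used, its loss being precisely $2^{kd(p)}$) are treated with the appropriate version of the decoupling inequality, but these are the same distinctions already present in \eqref{eq:conj2}, and the single shift of the regularity index by $-s(p)$ is uniform across both.
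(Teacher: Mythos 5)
Your proposal follows essentially the same route as the paper: reduce to a single dyadic frequency annulus, apply $\ell^{p}$-decoupling, and then, instead of converting the sectorial right-hand side into an $L^{p}$-Sobolev norm, recognise it as the $\Hps$-norm of the datum via the discrete sectorial characterisation of $\Hp$ on dyadic annuli, combined with polynomial $\Hp$-bounds for $e^{it\phi(D)}$. The identification you single out as the main obstacle is exactly the paper's Proposition \ref{prop:equivalent} (fed through Corollary \ref{cor:wave}): for $\supp\wh f\subseteq\{|\xi|\sim 2^{k}\}$ one has $\|f\|_{\Hp}\eqsim 2^{ks(p)}\bigl(\sum_{\nu\in\Theta_{k}}\|\chi_{\nu}(D)f\|_{L^{p}(\Rn)}^{p}\bigr)^{1/p}$, and the normalisation $2^{ks(p)}=\#\Theta_{k}^{\,1/2-1/p}$ that turns the sectorial $\ell^{p}$-sum into a directional average is indeed what produces the $s(p)$ gain over \eqref{eq:conj2}. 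You state this correctly, but the paper proves it by interpolating the $p=1,2,\infty$ endpoints through the $\ph_{\w}(D)$-characterisation of Definition \ref{def:HpFIO}, not by the angular square-function plus $\ell^{2}\leftrightarrow\ell^{p}$ rearrangement you sketch; in fact that rearrangement is already subsumed in the $\ell^{p}$-decoupling constant $2^{kd(p)}$, so there is no need to split $2<p<2(n+1)/(n-1)$ from $p\geq 2(n+1)/(n-1)$. Two further execution points: the paper sums the annuli by an $\ell^{1}_{k}$ triangle inequality after replacing $\veps$ by $\veps/2$, which sidesteps the $\ell^{2}_{k}$ Littlewood--Paley characterisation of $\Hp$ that your summation step would require; and the uniform sectorial $L^{p}$-bounds for $e^{it\phi(D)}$ that you use are indeed all that is needed, but the paper phrases this through the $\Hp$-boundedness of the full propagator (Corollary \ref{cor:halfwave}), which also yields the reverse inequality and hence the sharpness discussion in Section \ref{sec:sharpness}.
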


Theorem \ref{thm:localsmooth} is a special case of Theorem \ref{thm:localsmoothmain}, which considers the more general class of operators of the form $e^{it\phi(D)}$, where $\phi\in C^{\infty}(\Rn\setminus\{0\})$ is positively homogeneous of degree $1$ and such that $\rank(\partial_{\xi\xi}^{2}\phi)\equiv n-1$. Moreover, the exponent $d(p)-s(p)$ in \eqref{eq:localsmooth} is sharp, as is shown in Theorem \ref{thm:sharp}.

The Hardy space $\HT^{1}_{FIO}(\Rn)$ for Fourier integral operators (FIOs) was introduced by Smith in \cite{Smith98a}, and decades later his construction was extended to a scale $(\HT^{p}_{FIO}(\Rn))_{1\leq p\leq \infty}$ of invariant spaces for FIOs by Hassell, Portal and the author in \cite{HaPoRo20}. More precisely, $\HT^{p}_{FIO}(\Rn)$ is invariant under FIOs of order zero which have a compactly supported Schwartz kernel and are associated with a local canonical graph, and one has
\begin{equation}\label{eq:Sobolev}
W^{s(p),p}(\Rn)\subseteq \Hp\subseteq W^{-s(p),p}(\Rn)
\end{equation}
for all $1<p<\infty$ and $s\in\R$, with the natural modifications involving the local Hardy space $\HT^{1}(\Rn)$ for $p=1$, and $\bmo(\Rn)$ for $p=\infty$. In particular, 
\[
W^{d(p)+\veps,p}(\Rn)\subseteq \HT^{d(p)-s(p)+\veps,p}_{FIO}(\Rn)\subseteq W^{d(p)-2s(p)+\veps,p}(\Rn)
\]
for $2<p<\infty$, and \eqref{eq:localsmooth} recovers \eqref{eq:conj2}. However, since the exponents in \eqref{eq:Sobolev} are sharp, \eqref{eq:localsmooth} is in fact a strict improvement of \eqref{eq:conj2}. And since $d(p)=\sigma(p)$ for $p\geq 2(n+1)/(n-1)$, Theorem \ref{thm:localsmooth} improves upon the local smoothing conjecture for such $p$. 

Note that $d(p)-s(p)=\max(0,s(p)-1/p)$, 
\[
d(p)-s(p)=0=\sigma(p) \quad\text{for}\quad2<p\leq 2\frac{n}{n-1},
\]
and 
\[
d(p)-s(p)=0<2s(p)-\frac{1}{p}=\sigma(p)<d(p)\quad \text{for}\quad 2\frac{n}{n-1}<p<2\frac{n+1}{n-1}.
\]
Hence the sharpness of the embeddings in \eqref{eq:Sobolev} implies that, for $2<p<2(n+1)/(n-1)$, \eqref{eq:localsmooth} neither follows from the local smoothing conjecture nor implies it. 

It was shown in \cite{HeNaSe11} that the $\veps$ loss in \eqref{eq:conj2} can be removed for $n\geq 4$ and $p>2+\frac{4}{n-3}>2\frac{n+1}{n-1}$. It is not clear whether one may also let $\veps=0$ in \eqref{eq:localsmooth} for certain $p$. For $p>2+\frac{4}{n-3}$, by the sharpness of the exponents in \eqref{eq:Sobolev}, Theorem \ref{thm:localsmooth} neither follows from the local smoothing result in \cite{HeNaSe11} nor implies it. 

\subsection*{Applications}

The invariance of $\Hp$ under FIOs and related operators shows that \eqref{eq:localsmooth} and \eqref{eq:conj2} are invariant under application of such operators to the initial data. Corollary \ref{cor:bddest} collects a few instances of this paradigm.

In particular, the Cauchy problem associated to the half-wave equation $\partial_{t}u(t)=i\sqrt{-\Delta}u(t)$ is well posed on $\Hps$. Hence in Theorem \ref{thm:localsmooth} one in fact has
\[
[t\mapsto e^{it\sqrt{-\Delta}}f]\in C(\R;\HT^{d(p)-s(p)+\veps,p}_{FIO}(\Rn))\cap L^{p}_{\loc}(\R;L^{p}(\Rn))
\]
for all $f\in \HT^{d(p)-s(p)+\veps,p}_{FIO}(\Rn)$. Here it is relevant to note that, since $(e^{it\sqrt{-\Delta}})_{t\in\R}$ is a group, one has $e^{it\sqrt{-\Delta}}f\in\HT^{r,p}_{FIO}(\Rn)$ for some $t\in\R$ if and only if this holds for all $t\in\R$, and no smoothing can occur. Instead, the half-wave group has a smoothing effect on initial data with respect to the Sobolev embeddings in \eqref{eq:Sobolev}.

Local smoothing estimates have applications to various problems in harmonic and geometric analysis, such as spherical maximal theorems (see \cite{BeHiSo21,Sogge17}). It would be of interest to examine applications of the results in this article to these areas.

In \cite{Schippa22}, Schippa obtained local smoothing estimates for the Schr\"{o}dinger equation using invariant spaces for Schr\"{o}dinger propagators, the modulation spaces. He then used those estimates to prove new local and global well-posedness results for the nonlinear Schr\"{o}dinger equation. Very recently, in \cite{Rozendaal-Schippa22}, Schippa and the author applied this approach to function spaces related to the Hardy spaces for FIOs, to prove new local and global well-posedness results for nonlinear wave equations.

\subsection*{Idea behind the proof of Theorem \ref{thm:localsmooth}}

The proof exploits the full strength of the $\ell^{p}$-decoupling inequality, a corollary of the $\ell^{2}$-decoupling inequality from \cite{Bourgain-Demeter15}. This inequality bounds, for a suitable $g\in\Sw(\R)$ and for initial data $f$ with frequency support in a dyadic annulus, the $L^{p}(\R^{n+1})$ norm of $g(t)e^{it\sqrt{-\Delta}}f(x)$ by an $\ell^{p}$ sum involving a dyadic-parabolic decomposition of the frequency support of $f$. This dyadic-parabolic decomposition goes back to \cite{Fefferman73b} and plays a key role in the proof of the optimal $L^{p}$ regularity of Fourier integral operators \cite{SeSoSt91}. On each dyadic annulus one makes a different discrete decomposition, and the right-hand side of the decoupling inequality involves a different \emph{decoupling norm} on each annulus. 

By contrast, the Hardy spaces for FIOs involve a continuous dyadic-parabolic decomposition which is the same on each frequency annulus, and one observation in this article is that the discrete and continuous decompositions are equivalent on dyadic annuli (see Proposition \ref{prop:equivalent}). It then follows from the fact that the half-wave group is polynomially bounded on the Hardy spaces for FIOs that, when applied to the function $g(t)e^{it\sqrt{-\Delta}}f(x)$ in the setting of Theorem \ref{thm:localsmooth}, the right-hand side of the $\ell^{p}$-decoupling inequality is equivalent to the $\HT^{d(p)-s(p)+\veps,p}_{FIO}(\Rn)$ norm of $f$. In fact, Theorem \ref{thm:localsmooth} is a consequence of Theorem \ref{thm:localsmoothmain}, which is proved in the same manner but relies on a more general class of $\ell^{p}$-decoupling inequalities, from \cite{Bourgain-Demeter17}. As is shown in Corollary \ref{cor:wave}, the Hardy spaces for FIOs are adapted to each of these decoupling inequalities.

Hence, at least when restricted to dyadic frequency annuli, the Hardy spaces for FIOs form the \emph{largest} space of initial data for which one can obtain local smoothing estimates when applying $\ell^{p}$-decoupling inequalities in the manner in which they are typically used.  Moreover, since the exponent $d(p)-s(p)$ in \eqref{eq:localsmooth} is sharp, for $2<p<2(n+1)/(n-1)$ one cannot improve upon the local smoothing conjecture by using the Hardy spaces for FIOs as spaces of initial data. Here it is relevant to note that the example which shows that the exponent $d(p)-s(p)$ is sharp involves, for $2<p<2(n+1)/(n-1)$, a square function which was used in \cite{GuWaZh20} to prove the full local smoothing conjecture for $n=2$ (see Lemma \ref{lem:squarefun}). On the other hand, while the local smoothing conjecture  is still open for $n\geq3$, this article contains the essentially optimal local smoothing estimates with the Hardy spaces for FIOs as spaces of initial data.

\subsection*{Organization} Section \ref{sec:HpFIO} contains background on the Hardy spaces for FIOs, and Section \ref{sec:FIOs} concerns basic properties of FIOs. In Section \ref{sec:main} we prove our main result, and in Section \ref{sec:sharpness} we show that this result is essentially sharp.

\subsection*{Notation}

The natural numbers are $\N=\{1,2,\ldots\}$, and $\Z_{+}:=\N\cup\{0\}$. Throughout this article we fix $n\in\N$ with $n\geq2$.

For $\xi\in\Rn$ we write $\lb\xi\rb=(1+|\xi|^{2})^{1/2}$, and $\hat{\xi}=\xi/|\xi|$ if $\xi\neq0$. We use multi-index notation, where $\partial_{\xi}=(\partial_{\xi_{1}},\ldots,\partial_{\xi_{n}})$ and $\partial^{\alpha}_{\xi}=\partial^{\alpha_{1}}_{\xi_{1}}\ldots\partial^{\alpha_{n}}_{\xi_{n}}$  
for $\xi=(\xi_{1},\ldots,\xi_{n})\in\Rn$ and $\alpha=(\alpha_{1},\ldots,\alpha_{n})\in\Z_{+}^{n}$. Moreover, $\partial_{x\eta}^{2}\Phi$ is the mixed Hessian of a function $\Phi$ of the variables $x$ and $\eta$.
The Fourier transform of $f\in\Sw'(\Rn)$ is denoted by $\F f$ or $\widehat{f}$, and the Fourier multiplier with symbol $\ph\in\Sw'(\Rn)$ is denoted by $\ph(D)$. 

We write $f(s)\lesssim g(s)$ to indicate that $f(s)\leq Cg(s)$ for all $s$ and a constant $C>0$ independent of $s$, and similarly for $f(s)\gtrsim g(s)$ and $g(s)\eqsim f(s)$.

\section{Hardy spaces for Fourier integral operators}\label{sec:HpFIO}

In this section we first introduce the Hardy spaces for Fourier integral operators, and then we collect a few of their basic properties.

\subsection{Definitions}\label{subsec:HpFIOdef}

Fix a non-negative radial $\ph\in C^{\infty}_{c}(\Rn)$ such that $\ph(\xi)=0$ for $|\xi|>1$, and $\ph\equiv1$ in a neighborhood of zero. For $\w\in S^{n-1}$, $\sigma>0$ and $\xi\in\Rn\setminus\{0\}$, set $\ph_{\w,\sigma}(\xi):=c_{\sigma}\ph(\tfrac{\hat{\xi}-\w}{\sqrt{\sigma}})$, where $c_{\sigma}:=(\int_{S^{n-1}}\ph(\frac{e_{1}-\nu}{\sqrt{\sigma}})^{2}\ud\nu)^{-1/2}$ for $e_{1}=(1,0,\ldots,0)$ the first basis vector of $\Rn$ (this choice is irrelevant). Here and throughout, $\ud\nu$ is the normalized surface measure on the unit sphere $S^{n-1}$. Also set $\ph_{\w,\sigma}(0):=0$. Let $\Psi\in C^{\infty}_{c}(\Rn)$ be non-negative, radial, such that $\Psi(\xi)=0$ if $|\xi|\notin[1/2,2]$, and such that $\int_{0}^{\infty}\Psi(\sigma\xi)^{2}\frac{\ud\sigma}{\sigma}=1$ for all $\xi\neq 0$. Now, for $\w\in S^{n-1}$ and $\xi\in\Rn$, set 
\[
\ph_{\w}(\xi):=\int_{0}^{4}\Psi(\sigma\xi)\ph_{\w,\sigma}(\xi)\frac{\ud\sigma}{\sigma}.
\]
Some properties of these functions are as follows (see \cite[Remark 3.3]{Rozendaal21}):
\begin{enumerate}
\item\label{it:phiproperties1} For all $\w\in S^{n-1}$ and $\xi\neq0$ one has 
\begin{equation}\label{eq:phiwsupport}
\ph_{\w}(\xi)=0\text{ if }|\xi|<\tfrac{1}{8}\text{ or }|\hat{\xi}-\w|>2|\xi|^{-1/2}.
\end{equation}
\item\label{it:phiproperties2} For all $\alpha\in\Z_{+}^{n}$ and $\beta\in\Z_{+}$ there exists a $C_{\alpha,\beta}\geq0$ such that
\[
|(\w\cdot \partial_{\xi})^{\beta}\partial^{\alpha}_{\xi}\ph_{\w}(\xi)|\leq C_{\alpha,\beta}|\xi|^{\frac{n-1}{4}-\frac{|\alpha|}{2}-\beta}
\]
for all $\w\in S^{n-1}$ and $\xi\neq0$.
\item\label{it:phiproperties3} For all $\alpha\in\Z_{+}^{n}$ there exists a $C_{\alpha}\geq0$ such that
\[
\Big|\partial_{\xi}^{\alpha}\Big(\int_{S^{n-1}}\ph_{\w}(\xi)\ud\w\Big)^{-1}\Big|\leq C_{\alpha} |\xi|^{\frac{n-1}{4}-|\alpha|}
\]
for all $\xi\in\Rn$ with $|\xi|\geq1/2$. Hence there exists a radial $m\in S^{(n-1)/4}(\Rn)$ such that if $f\in\Sw'(\Rn)$ satisfies $\supp(\wh{f}\,)\subseteq \{\xi\in\Rn\mid |\xi|\geq1/2\}$, then 
\[
f=\int_{S^{n-1}}m(D)\ph_{\nu}(D)f\ud\nu.
\]
\end{enumerate}

In \eqref{it:phiproperties3}, $S^{(n-1)/4}(\Rn)$ is the standard class of symbols $a$ of order $(n-1)/4$. That is, $a\in C^{\infty}(\Rn)$, and for all $\alpha\in\Z_{+}^{n}$ there exists a $C_{\alpha}\geq 0$ such that
\[
|\partial_{\eta}^{\alpha}a(\eta)|\leq C_{\alpha}\lb \eta\rb^{(n-1)/4-|\alpha|}
\]
for all $\eta\in\Rn$.

Throughout, for simplicity of notation, we write $\HT^{p}(\Rn)=L^{p}(\Rn)$ for $1<p<\infty$, and $\HT^{\infty}(\Rn)=\bmo(\Rn)$. Moreover, $\HT^{1}(\Rn)$ is the classical local Hardy space. We also fix a $q\in C^{\infty}_{c}(\Rn)$ such that $q(\xi)=1$ for $|\xi|\leq 2$.

\begin{definition}\label{def:HpFIO}
For $p\in[1,\infty)$, $\Hp$ consists of those $f\in\Sw'(\Rn)$ such that $q(D)f\in L^{p}(\Rn)$, $\ph_{\w}(D)f\in \HT^{p}(\Rn)$ for almost all $\w\in S^{n-1}$, and
\[
\Big(\int_{S^{n-1}}\|\ph_{\w}(D)f\|_{\HT^{p}(\Rn)}^{p}\ud\w\Big)^{1/p}<\infty,
\]
endowed with the norm
\[
\|f\|_{\HT^{p}_{FIO}(\Rn)}:=\|q(D)f\|_{L^{p}(\Rn)}+\Big(\int_{S^{n-1}}\|\ph_{\w}(D)f\|_{\HT^{p}(\Rn)}^{p}\ud\w\Big)^{1/p}.
\]
Moreover, $\HT^{\infty}_{FIO}(\Rn):=(\HT^{1}_{FIO}(\Rn))^{*}$. For $p\in[1,\infty]$ and $s\in\R$, $\HT^{s,p}_{FIO}(\Rn)$ consists of all $f\in\Sw'(\Rn)$ such that $\lb D\rb^{s}f\in\HT^{p}_{FIO}(\Rn)$, endowed with the norm
\[
\|f\|_{\HT^{s,p}_{FIO}(\Rn)}:=\|\lb D\rb^{s}f\|_{\Hp}.
\] 
\end{definition}

In fact, this is not how the Hardy spaces for FIOs were originally defined in \cite{Smith98a} and \cite{HaPoRo20}, namely in terms of conical square functions and tent spaces over the cosphere bundle. That these definitions are equivalent was shown in \cite{FaLiRoSo19,Rozendaal21}.

\subsection{Properties}\label{subsec:HpFIOprop}

First recall the Sobolev embeddings from \eqref{eq:Sobolev}. By \cite[Theorem 7.4]{HaPoRo20}, these in fact hold for all $p\in[1,\infty]$, upon replacing $W^{s,p}(\Rn)$ by $\lb D\rb^{-s}\HT^{p}(\Rn)$.

The Hardy spaces for FIOs form a complex interpolation scale. That is, let $p_{0},p_{1},p\in[1,\infty]$, $s_{0},s_{1},s\in\R$ and $\theta\in[0,1]$ be such that $\frac{1}{p}=\frac{1-\theta}{p_{0}}+\frac{\theta}{p_{1}}$ and $s=(1-\theta)s_{0}+\theta s_{1}$. Then, as follows from \cite[Proposition 6.7]{HaPoRo20}, one has
\[
[\HT^{s_{0},p_{0}}_{FIO}(\Rn),\HT^{s_{1},p_{1}}_{FIO}(\Rn)]_{\theta}=\Hps.
\]
Moreover, by \cite[Proposition 6.8]{HaPoRo20}, for all $p\in[1,\infty)$ and $s\in\R$ one has
\[
(\Hps)^{*}=\HT^{-s,p'}_{FIO}(\Rn).
\]
Here the duality pairing is the standard distributional pairing $\lb f,g\rb$ for $f\in\Sw(\Rn)\subseteq \Hps$ and $g\in\HT^{-s,p'}_{FIO}(\Rn)\subseteq\Sw'(\Rn)$. By \cite[Proposition 6.6]{HaPoRo20}, the Schwartz functions lie dense in $\Hps$, so this uniquely determines the duality pairing.

The following lemma concerns a key feature of the Hardy spaces for FIOs: the relationship between the $\Hp$ norm and the $\HT^{p}(\Rn)$ norm of a function with frequency support in a dyadic-parabolic region.

\begin{lemma}\label{lem:dyadicpar}
Let $p\in[1,\infty]$ and $s\in\R$. Then there exists a $C>0$ such that the following statements hold for all $f\in\Hps$ such that 
\begin{equation}\label{eq:supportwp}
\supp(\wh{f}\,)\subseteq\{\xi\in\Rn\mid |\xi|\in [R/2,2R], |\hat{\xi}-\nu|\leq 2R^{-1/2}\}
\end{equation}
for some $R\geq1$ and $\nu\in S^{n-1}$:
\begin{enumerate}
\item\label{it:sobeq1}
If $p\leq 2$, then
\[
\frac{1}{C}R^{s-s(p)}\|f\|_{L^{p}(\Rn)} \leq \|f\|_{\Hps}\leq CR^{s-s(p)} \|f\|_{L^{p}(\Rn)}.
\]
\item\label{it:sobeq2}
If $p>2$, then
\[
\frac{1}{C}R^{s+s(p)}\|f\|_{L^{p}(\Rn)} \leq \|f\|_{\Hps}\leq CR^{s+s(p)} \|f\|_{L^{p}(\Rn)}.
\]
\end{enumerate}
\end{lemma}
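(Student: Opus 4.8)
The plan is to reduce the estimate to a single model frequency region and exploit the almost-orthogonality built into the definition of $\Hp$.

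\textbf{Step 1: Localization to one direction.} First I would observe that if $\wh{f}$ is supported in the dyadic-parabolic region \eqref{eq:supportwp} around $\nu$, then $\ph_{\w}(D)f=0$ unless $\w$ lies in a cap of radius $\eqsim R^{-1/2}$ around $\nu$ on $S^{n-1}$, by the support condition \eqref{eq:phiwsupport} (combined with the fact that $|\xi|\eqsim R$). Thus the angular integral $\int_{S^{n-1}}\|\ph_{\w}(D)f\|_{\HT^{p}}^{p}\,\ud\w$ only sees a cap of measure $\eqsim R^{-(n-1)/2}$, which will produce the power of $R$ that distinguishes the $p\leq2$ and $p>2$ cases. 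Also, since $R\geq1$ and $|\xi|\geq R/2\geq1/2$ on the support, $q(D)f$ is controlled by $f$ itself (with $q$ as fixed before Definition \ref{def:HpFIO}, one can arrange $q(D)f$ to be handled by the low-frequency part, but here the support is bounded below so this term is harmless and comparable to $\|f\|_{L^p}$ up to the desired powers; more carefully one inserts a fattened cutoff). It suffices to prove $\big(\int_{|\w-\nu|\lesssim R^{-1/2}}\|\ph_{\w}(D)f\|_{\HT^{p}}^{p}\,\ud\w\big)^{1/p}\eqsim R^{\mp s(p)}\|f\|_{L^p}$ for $f$ with frequency support as in \eqref{eq:supportwp}, the sign being $-$ for $p\leq2$ and $+$ for $p>2$; the factor $R^{s}$ then comes from $\lb D\rb^{s}\eqsim R^{s}$ on the support.

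\textbf{Step 2: The two-sided bound for a single cutoff.} For fixed $\w$ in the relevant cap, $\ph_{\w}(D)$ is, up to the normalization $c_\sigma$ and the symbol bounds in property \eqref{it:phiproperties2}, a Fourier multiplier adapted to a single dyadic-parabolic slab of dimensions $R\times(R^{1/2})^{n-1}$; by the symbol estimates it is bounded on $\HT^p(\Rn)$ (for $1<p<\infty$ this is a standard Mikhlin-type estimate on the anisotropically rescaled slab, and for $p=1,\infty$ one uses that it maps $\HT^1\to\HT^1$ and $\bmo\to\bmo$). This gives the upper bound $\|\ph_\w(D)f\|_{\HT^p}\lesssim \|f\|_{\HT^p}=\|f\|_{L^p}$ uniformly in $\w$, hence the upper bound in Step 1 after integrating over the cap of measure $R^{-(n-1)/2}$ and recalling $s(p)=\frac{n-1}{2}|\frac12-\frac1p|$: for $p\geq2$ one gets $R^{-(n-1)/(2p)}\|f\|_{L^p}$ — wait, this needs the reproducing formula to go the other way. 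The cleaner route for the upper bound when $p>2$ and the lower bound when $p\le 2$ is to use property \eqref{it:phiproperties3}: since $\supp(\wh f)\subseteq\{|\xi|\geq1/2\}$, we have $f=\int_{S^{n-1}}m(D)\ph_\nu(D)f\,\ud\nu$, and inserting this and using the $\HT^p$-boundedness of the zeroth-order pieces together with the support localization recovers $\|f\|_{L^p}$ from the angular average. The interplay of these two directions (the reproducing formula bounding $f$ by its pieces, and individual boundedness bounding each piece by $f$), combined with the measure $R^{-(n-1)/2}$ of the cap and Hölder/the trivial $\ell^p$–$\ell^\infty$ comparisons, yields exactly the powers $R^{\mp s(p)}$ with the sign governed by whether $p\le 2$ (where $\ell^2\hookrightarrow\ell^p$ type losses go one way) or $p\ge 2$ (the other way).

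\textbf{Step 3: Assembling and the $p=1,\infty$ endpoints.} Combining Steps 1 and 2, multiplying through by $\lb D\rb^s\eqsim R^s$ (which is a symbol of order $s$, bounded with $R^{-s}$-normalized bounds on the slab, hence commutes with all estimates up to constants), gives both inequalities. For $p=\infty$ I would argue by duality, using $(\Hps)^*=\HT^{-s,p'}_{FIO}$ from the excerpt together with the $p=1$ case and the fact that the adjoint of a function with frequency support in \eqref{eq:supportwp} tested against such functions only sees the same region. \textbf{The main obstacle} is getting the powers of $R$ exactly right — i.e. tracking that the angular cap has measure $\eqsim R^{-(n-1)/2}$ and that this interacts with the $L^p$-normalized multiplier bounds to produce precisely $R^{\pm s(p)}$ with the correct sign in each range of $p$, rather than an off-by-a-power error; this is where one must be careful to use the reproducing identity in one direction and individual $\HT^p$-boundedness of $\ph_\w(D)$ in the other, and to invoke the anisotropic rescaling $\xi\mapsto(R\xi_1,R^{1/2}\xi')$ that turns the slab into a unit ball so that Mihlin's theorem applies uniformly in $R$ and $\nu$.
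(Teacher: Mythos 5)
Your plan is genuinely different from the paper's, which reduces to $s=0$ via the dyadic annulus support and then simply cites \cite[Proposition 6.4]{FaLiRoSo19}; you instead attempt a direct, self-contained argument from the definition. The skeleton you propose --- localize $\w$ to a cap of measure $\eqsim R^{-(n-1)/2}$ via \eqref{eq:phiwsupport}, use individual boundedness of $\ph_\w(D)$ for the upper bound and the reproducing formula \eqref{it:phiproperties3} for the lower bound --- is sound. But Step 2 contains a concrete error which you notice (the ``wait'') but then misdiagnose.

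You treat $\ph_\w(D)$ as a zeroth-order multiplier, writing $\|\ph_\w(D)f\|_{\HT^p}\lesssim\|f\|_{L^p}$. By property \eqref{it:phiproperties2}, however, $\ph_\w$ is a symbol of order $(n-1)/4$; this is precisely the effect of the normalization $c_\sigma\eqsim\sigma^{-(n-1)/4}$, which makes $\ph_\w(\xi)\eqsim|\xi|^{(n-1)/4}$ on its support. On data with frequency $\eqsim R$ the correct bound is $\|\ph_\w(D)f\|_{\HT^p}\lesssim R^{(n-1)/4}\|f\|_{L^p}$, uniformly in $\w$. With that factor restored, integrating the $p$-th power over the cap gives
\[
\Big(\int_{S^{n-1}}\|\ph_\w(D)f\|_{\HT^p}^p\,\ud\w\Big)^{1/p}\lesssim R^{\frac{n-1}{4}-\frac{n-1}{2p}}\|f\|_{L^p}=R^{\frac{n-1}{2}(\frac12-\frac1p)}\|f\|_{L^p},
\]
which is $R^{s(p)}\|f\|_{L^p}$ for $p\geq2$ and $R^{-s(p)}\|f\|_{L^p}$ for $p\leq2$: the upper bound in \emph{both} regimes, with no role for the reproducing formula. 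Symmetrically, the lower bound in both regimes follows from $f=\int_{S^{n-1}}m(D)\ph_\nu(D)f\,\ud\nu$, where $m\in S^{(n-1)/4}(\Rn)$ again contributes $R^{(n-1)/4}$, combined with Minkowski and H\"older on the cap, producing $R^{\frac{n-1}{2}(\frac1p-\frac12)}$ with the sign coming out automatically. So the fix is not to ``use the reproducing formula the other way'': individual boundedness always yields the upper bound, reproduction always the lower bound, and the two ranges of $p$ differ only in the sign of $\tfrac12-\tfrac1p$. The fix is simply to restore the $R^{(n-1)/4}$ you dropped. Once that is done, your reduction to $s=0$, your treatment of $q(D)f$ when $R\eqsim1$, and the duality reduction at $p=\infty$ are all reasonable, though the last is only sketched.
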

\begin{proof}
Since $f$ has frequency support in a dyadic annulus, one has 
\[
\|\lb D\rb^{s}f\|_{\HT^{p}(\Rn)}\eqsim R^{s}\|f\|_{\HT^{p}(\Rn)}\eqsim R^{s}\|f\|_{L^{p}(\Rn)}.
\]
Hence the statement is a direct consequence of \cite[Proposition 6.4]{FaLiRoSo19}.
\end{proof}

\section{Fourier integral operators}\label{sec:FIOs}

For the general theory of Fourier integral operators, and the associated notions from symplectic geometry, we refer to \cite{Hormander09,Duistermaat11,Sogge17}. To prove Theorem \ref{thm:localsmooth} we will only use Corollary \ref{cor:halfwave}, concerning the operators $e^{it\phi(D)}$. However, these operators are part of a larger class of Fourier integral operators, introduced in Definition \ref{def:operator}, which are treated by the same methods. One of the key properties of the Hardy spaces for FIOs is their invariance under such operators.

\subsection{Definitions}\label{subsec:FIOdef}

For $m\in\R$, recall that $S^{m}_{1,0}$ consists of all $a\in C^{\infty}(\R^{2n})$ such that 
\begin{equation}\label{eq:seminorms}
\|a\|_{\alpha,\beta}:=\sup_{(x,\eta)\in\R^{2n}}\lb\eta\rb^{-m+|\alpha|}|\partial_{x}^{\beta}\partial_{\eta}^{\alpha}a(x,\eta)|<\infty
\end{equation}
for all $\alpha,\beta\in\Z_{+}^{n}$. In \cite{HaPoRo20} a larger symbol class was considered, the elements of which behave in an anisotropic manner which matches the dyadic-parabolic decomposition of $\Hp$. This class, denoted by $S^{m}_{1/2,1/2,1}$, contains H\"{o}rmander's $S^{m}_{1,1/2}$ class but is strictly contained in $S^{m}_{1/2,1/2}$. The statements in Proposition \ref{prop:operator} \eqref{it:operator2} and Lemma \ref{lem:operatornorm} below in fact extend to this larger symbol class.
 
In the following definition, a special case of \cite[Definition 2.11]{HaPoRo20}, and throughout, $o:=\Rn\times\{0\}$ denotes the zero section.

\begin{definition}\label{def:operator}
Let $m\in\R$, $a\in S^{m}_{1,0}$ and $\Phi\in C^{\infty}(\R^{2n}\setminus o)$, and set
\begin{equation}\label{eq:oscint}
Tf(x):=\int_{\Rn}e^{i\Phi(x,\eta)}a(x,\eta)\wh{f}(\eta)\ud\eta
\end{equation}
for $f\in\Sw(\Rn)$ and $x\in \Rn$. Then $T$ is a Fourier integral operator of order $m$ in \emph{standard form}, associated with a \emph{global canonical graph}, if:
\begin{enumerate}
\item\label{it:phase1} $\Phi$ is real-valued and positively homogeneous of degree $1$ in the $\eta$-variable;
\item\label{it:phase2} $\sup_{(x,\eta)\in \R^{2n}\setminus o}|\partial_{x}^{\beta}\partial_{\eta}^{\alpha}\Phi(x,\hat{\eta})|<\infty$ for all $\alpha,\beta\in\Z_{+}^{n}$ with $|\alpha|+|\beta|\geq 2$;
\item\label{it:phase3} $\inf_{(x,\eta)\in \R^{2n}\setminus o}| \det \partial^2_{x \eta} \Phi (x,\eta)|>0$;
\item\label{it:phase4} For each $x\in\Rn$, $\eta\mapsto \partial_{x}\Phi(x,\eta)$ is a bijection on $\R^{n}\setminus \{0\}$.
\end{enumerate}
\end{definition}

\begin{remark}\label{rem:oscint} 
By Hadamard's global inverse function theorem \cite[Theorem 6.2.8]{Krantz-Parks13}, condition \eqref{it:phase4} is superfluous for $n\geq3$. Moreover, \eqref{it:phase4} holds if $\Phi(x,\eta)=x\cdot\eta+\phi(\eta)$ for some $\phi\in C^{\infty}(\Rn\setminus\{0\})$ which is positively homogeneous of degree $1$. 

If \eqref{it:phase4} holds, then it follows from the global inverse function theorem that the map $(\partial_{\eta}\Phi(x,\eta),\eta)\mapsto (x,\partial_{x}\Phi(x,\eta))$ is a homogeneous canonical transformation on $\R^{2n}\setminus o$, and the canonical relation of $T$ is the graph of this transformation.
\end{remark}

Recall that a Fourier integral operator of order $m$, associated with a local canonical graph and having a compact Schwartz kernel, can, modulo an operator with a Schwartz kernel which is a Schwartz function, be expressed as a finite sum of operators which in appropriate coordinate systems are as in \eqref{eq:oscint}, where the symbol $a$ has compact support in the $x$-variable (see e.g. \cite[Proposition 6.2.4]{Sogge17}). In this case, \eqref{it:phase2} is automatically satisfied, \eqref{it:phase3} holds on the support of $a$, and the map in \eqref{it:phase4} is a locally defined homogeneous canonical transformation. By contrast, in Definition \ref{def:operator}, the symbols are not required to have compact spatial support, but the conditions on the phase function are required to hold on all of $\R^{2n}\setminus o$.

\subsection{Properties}\label{subsec:FIOprop}

The following proposition lists some classes of operators which are bounded on $\Hps$ but in general not on $W^{s,p}(\Rn)$.

\begin{proposition}\label{prop:operator}
Let $p\in[1,\infty]$, $s\in\R$ and $T$ satisfy one of the following conditions:
\begin{enumerate}
\item\label{it:operator1} $T$ is a Fourier integral operator of order zero, associated with a local canonical graph, with a compactly supported Schwartz kernel; 
\item\label{it:operator2} $T$ is a Fourier integral operator of order zero in standard form, associated with a global canonical graph;
\item\label{it:operator3} $T=\cos(t_{0}\sqrt{L})$, for $t_{0}\in\R$ and $L$ an elliptic divergence-form operator with bounded real-valued $C^{1,1}(\Rn)$ coefficients. Moreover, one has $1<p<\infty$, $2s(p)<1$, and $-2+s(p)\leq s\leq 2-s(p)$. 
\end{enumerate} 
Then $T:\Hps\to\Hps$ is bounded.
\end{proposition}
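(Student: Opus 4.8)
The plan is to deduce the $\Hps$-boundedness from already-established results in \cite{HaPoRo20} (and the references therein), reducing each of the three cases to the boundedness of a single FIO of order zero associated with a (local or global) canonical graph on the base space $\Hp=\HT^{0,p}_{FIO}(\Rn)$. The key structural fact is that $\lb D\rb^{s}$ is itself a Fourier integral operator in standard form associated with the identity canonical transformation (indeed a classical pseudodifferential operator of order $s$), and that such operators conjugate the class of FIOs considered here into itself with a shift of order. So once one knows boundedness on $\Hp$, the statement for general $s$ follows by writing $T=\lb D\rb^{-s}\big(\lb D\rb^{s}T\lb D\rb^{-s}\big)\lb D\rb^{s}$ and observing that $\lb D\rb^{s}T\lb D\rb^{-s}$ is again an operator of the same type and order zero, whence bounded on $\Hp$, while $\lb D\rb^{\pm s}:\HT^{s,p}_{FIO}\to\HT^{0,p}_{FIO}$ and $\HT^{0,p}_{FIO}\to\HT^{s,p}_{FIO}$ are isometries by Definition \ref{def:HpFIO}. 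Thus the core of the proof is the case $s=0$.

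For case \eqref{it:operator2}, the boundedness of an order-zero FIO in standard form associated with a global canonical graph on $\Hp$ is precisely one of the main boundedness theorems of \cite{HaPoRo20}; I would cite it directly (the discussion after Definition \ref{def:operator} notes that this extends even to the anisotropic symbol class $S^{m}_{1/2,1/2,1}$, but we only need $S^0_{1,0}$ here). For case \eqref{it:operator1}, an order-zero FIO associated with a \emph{local} canonical graph with compactly supported Schwartz kernel can, modulo a smoothing operator, be written as a finite sum of operators which in suitable coordinates are of the form \eqref{eq:oscint} with $a$ compactly supported in $x$; as remarked in the text, conditions \eqref{it:phase2}--\eqref{it:phase4} are then automatic on the relevant region, so each summand extends to (or is a compression of) an operator in standard form associated with a global canonical graph, and one invokes case \eqref{it:operator2}. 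The smoothing remainder has a Schwartz kernel that is a Schwartz function, hence maps $\Sw'(\Rn)$ continuously into, say, every $W^{N,p}(\Rn)\subseteq\Hp$, so it is harmless. For case \eqref{it:operator3}, one uses the finite propagation speed and the parametrix construction for $\cos(t_0\sqrt{L})$ with $C^{1,1}$ coefficients (as in the work of Smith and of Seeger--Sogge--Stein, also recorded in \cite{HaPoRo20}): on the relevant time scale this half-wave-type operator is, modulo acceptable errors, a finite sum of FIOs of order zero associated with local canonical graphs, and the restrictions $2s(p)<1$ and $-2+s(p)\le s\le 2-s(p)$ are exactly the range in which the low-regularity parametrix and the Sobolev mapping properties of the error terms are valid; I would cite the corresponding statement in \cite{HaPoRo20}.

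The main obstacle is book-keeping rather than conceptual: one must (i) make precise the passage from a local canonical graph with compact kernel to the global standard-form setting, checking that the coordinate changes and partition of unity do not destroy the symbol estimates \eqref{eq:seminorms} or the non-degeneracy \eqref{it:phase3}, and (ii) for case \eqref{it:operator3}, track the regularity of the coefficients through the parametrix so that the error operators land in a range of Sobolev spaces controlled by $\pm(2-s(p))$, which is where the hypotheses on $p$ and $s$ are used. Since all of these steps are carried out in \cite{HaPoRo20}, the cleanest exposition is to state the reduction to $s=0$ explicitly via conjugation by $\lb D\rb^{s}$ and then cite the three boundedness results; I would not reproduce the parametrix constructions.
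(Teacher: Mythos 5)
Your opening reduction---writing $T=\lb D\rb^{-s}(\lb D\rb^{s}T\lb D\rb^{-s})\lb D\rb^{s}$ and claiming that $\lb D\rb^{s}T\lb D\rb^{-s}$ is ``again an operator of the same type''---does not hold for the cases it is supposed to cover, and this is the central gap. For case \eqref{it:operator3} it is simply false: $\lb D\rb^{s}\cos(t_{0}\sqrt{L})\lb D\rb^{-s}$ is not of the form $\cos(t_{0}\sqrt{L'})$ for any elliptic $L'$ with $C^{1,1}$ coefficients, and more tellingly, the hypothesis $-2+s(p)\leq s\leq 2-s(p)$ is $s$-dependent, which already precludes any conjugation argument that would collapse all $s$ to $s=0$. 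The paper handles case \eqref{it:operator3} by a direct citation to \cite[Corollary 5.7]{Hassell-Rozendaal20} (not \cite{HaPoRo20}, as you suggest), which establishes boundedness on $\Hps$ for the stated range of $s$ in one go. For case \eqref{it:operator1}, the conjugation destroys the compactly supported Schwartz kernel---so the conjugated operator is no longer of the type in \eqref{it:operator1}, and you cannot then ``reduce to \eqref{it:operator2}'' because the resulting FIO is associated only to a local canonical graph with no support restriction; you would need to decompose first and conjugate afterwards, which is a different argument from the one you wrote.

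The paper sidesteps these issues entirely by avoiding the conjugation. It works directly on weighted tent spaces: one decomposes $T$ (via \cite[Proposition 2.14]{HaPoRo20}) as $\sum_{j}T_{j,1}T_{j,2}+R$ with normal oscillatory integral operators $T_{j,i}$ and smoothing $R$, conjugates with the wave packet transform $W$, and observes that the resulting operators satisfy off-singularity bounds or are residual; these are bounded not only on $T^{p}(\Sp)$ but also on the weighted tent spaces $T^{p}_{s}(\Sp)$ from \cite{Hassell-Rozendaal20}, and since $\|f\|_{\Hps}\eqsim\|Wf\|_{T^{p}_{s}(\Sp)}$, boundedness on $\Hps$ follows for all $s$ simultaneously. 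You should also note that the paper's proof of case \eqref{it:operator2} requires a high/low frequency split $T=T(1-q')(D)+Tq'(D)$: the off-singularity bounds are only available for the high-frequency part, while the low-frequency piece $Tq'(D)$ is handled separately by integration by parts and Young's inequality to show $Tq'(D):L^{p}\to W^{N,p}$ for all $N$, combined with the Sobolev embeddings \eqref{eq:Sobolev}. Your sketch omits this split, and without it the appeal to off-singularity bounds is incomplete near the zero section.
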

\begin{proof}
\eqref{it:operator1}: This follows in the same way as the case $s=0$ from \cite{HaPoRo20}; we will use the same terminology as there. By \cite[Proposition 2.14]{HaPoRo20}, there exist an $m\in\N$, normal oscillatory integral operators $(T_{j,1})_{j=1}^{m}$ and $(T_{j,2})_{j=1}^{m}$, and a smoothing operator $R$, such that $T=\sum_{j=1}^{m}T_{j,1}T_{j,2}+R$. As in the proof of \cite[Corollary 5.4]{HaPoRo20}, when conjugated with a suitable isometric wave packet transform $W:L^{2}(\Rn)\to L^{2}(\Tp)$, the $T_{j,1}$ and $T_{j,2}$ can each be written as a sum of an operator which satisfies off-singularity bounds and a residual operator. The smoothing operator $R$ yields a residual operator $WRW^{*}$ as well, by \cite[Proposition 5.3]{HaPoRo20}. All these operators are bounded on tent spaces over the cosphere bundle, by \cite[Theorem 3.7 and Proposition 3.6]{HaPoRo20}, but for $s\neq0$ it is important to note that they are also bounded on the weighted tent spaces $T^{p}_{s}(\Sp)$ used in \cite{Hassell-Rozendaal20}. This is shown in the same way as \cite[Proposition 2.4]{Hassell-Rozendaal20}, where the canonical relations under consideration are canonical graphs on all of $\R^{2n}\setminus o$, which plays no role in that proof. Moreover, \cite[Proposition 3.4]{Hassell-Rozendaal20} says that $\|f\|_{\Hps}\eqsim \|Wf\|_{T^{p}_{s}(\Sp)}$. It follows from this that the $T_{j,1}$, $T_{j,2}$ and $R$ are all bounded on $\Hps$, and therefore so is $T$.

\eqref{it:operator2}: Let $q'\in C^{\infty}_{c}(\Rn)$ equal $1$ near zero, and set $T_{1}:=T(1-q')(D)$ and $T_{2}:=Tq'(D)$. As in \eqref{it:operator1}, $WT_{1}W^{*}$ satisfies off-singularity bounds, by \cite[Corollary 5.2]{HaPoRo20}. Then the same arguments as above show that $T_{1}$ is bounded on $\Hps$.

On the other hand, for the low-frequency term $T_{2}$, one can integrate by parts and apply Young's inequality as in the proof of \cite[Theorem 1.18]{DosSantosFerreira-Staubach14} to show that $\partial_{x}^{\alpha}T_{2}$ is bounded on $L^{p}(\Rn)$ for each $\alpha\in\Z_{+}^{n}$. In particular, $T_{2}:L^{p}(\Rn)\to W^{N,p}(\Rn)$ for each $N\in\N$. Now choose $N>s+s(p)$, write $q'=q'q''$ for some $q''\in C^{\infty}_{c}(\Rn)$, so that $q''(D):\Hps\to L^{p}(\Rn)$, and use \eqref{eq:Sobolev}, to obtain
\[
T_{2}=T_{2}q''(D):\Hps\to W^{N,p}(\Rn)\subseteq \Hps.
\]

\eqref{it:operator3}: This is contained in \cite[Corollary 5.7]{Hassell-Rozendaal20}.
\end{proof}

\begin{remark}\label{rem:operators}
If the coefficients of the operator $L$ in \eqref{it:operator3} have additional regularity, then the conditions on $p$ and $s$ can be weakened, cf.~\cite{Hassell-Rozendaal20}. Moreover, similar statements hold for the other solution operator to the Cauchy problem associated with the wave equation $(\partial_{t}^{2}+L)u(t,x)=0$, and by \cite{Rozendaal20,Rozendaal22} pseudodifferential operators with rough coefficients are also bounded on $\Hps$ for suitable $p$ and $s$.
\end{remark}

In fact, we will need a more precise statement about the operator norm of families of Fourier integral operators as in \eqref{it:operator2}. 

\begin{lemma}\label{lem:operatornorm}
For each $t\in\R$, let $T_{t}$ be a Fourier integral operator of order zero in standard form, associated with a global canonical graph, with symbol $a_{t}$ and phase function $\Phi_{t}$. Suppose that, for all $\alpha,\beta\in\Z_{+}^{n}$, there exist $\veps,\kappa,M>0$ such that the following conditions hold for all $t\in\R$:
\begin{itemize}
\item\label{it:operatornorm1} $\|a_{t}\|_{\alpha,\beta}\leq \kappa (1+|t|)^{M}$;
\item\label{it:operatornorm2} $\sup_{(x,\eta)\in \R^{2n}\setminus o}|\partial_{x}^{\beta}\partial_{\eta}^{\alpha}\Phi_{t}(x,\hat{\eta})|\leq \kappa(1+|t|)^{M}$ if $|\alpha|+|\beta|\geq 2$;
\item\label{it:operatornorm3} $\inf_{(x,\eta)\in \R^{2n}\setminus o}| \det \partial^2_{x \eta} \Phi (x,\eta)|\geq \veps$.
\end{itemize} 
Let $p\in[1,\infty]$ and $s\in\R$. Then there exist $C,N>0$ such that
\[
\|T_{t}\|_{\La(\Hps)}\leq C(1+|t|)^{N}
\]
for all $t\in\R$.
\end{lemma}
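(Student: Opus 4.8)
The plan is to reduce the quantitative operator norm bound to a uniform version of the qualitative boundedness statement in Proposition~\ref{prop:operator}~\eqref{it:operator2}, by carefully tracking how the constants in that proof depend on the seminorms of the symbol and phase. First I would split $T_{t}=T_{t,1}+T_{t,2}$ with $T_{t,1}:=T_{t}(1-q')(D)$ and $T_{t,2}:=T_{t}q'(D)$, exactly as in the proof of \eqref{it:operator2}. For the high-frequency part, the argument there passes through the wave packet transform $W:L^{2}(\Rn)\to L^{2}(\Tp)$ and shows that $WT_{t,1}W^{*}$ satisfies off-singularity bounds on the weighted tent spaces $T^{p}_{s}(\Sp)$, using \cite[Corollary 5.2]{HaPoRo20} together with the arguments of \cite{Hassell-Rozendaal20}. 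The point is that the constants produced in those estimates depend only on finitely many of the seminorms $\|a_{t}\|_{\alpha,\beta}$, on finitely many of the quantities $\sup|\partial_{x}^{\beta}\partial_{\eta}^{\alpha}\Phi_{t}(x,\hat\eta)|$ with $|\alpha|+|\beta|\geq 2$, and on a lower bound for $\inf|\det\partial^{2}_{x\eta}\Phi_{t}|$. By hypotheses \eqref{it:operatornorm1}--\eqref{it:operatornorm3}, each of these is controlled by $\kappa(1+|t|)^{M}$ (respectively bounded below by $\veps$) for suitable $\kappa,M,\veps$ depending only on the relevant multi-indices. Since only finitely many such indices enter, one obtains $\|T_{t,1}\|_{\La(\Hps)}\leq C(1+|t|)^{N_{1}}$ for some $C,N_{1}$.

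For the low-frequency part $T_{t,2}$, I would revisit the integration-by-parts and Young's-inequality argument from the proof of \cite[Theorem~1.18]{DosSantosFerreira-Staubach14} used in Proposition~\ref{prop:operator}~\eqref{it:operator2}, and observe that the resulting $L^{p}(\Rn)\to W^{N,p}(\Rn)$ bound for $T_{t,2}$ (with $N>s+s(p)$ fixed) is again polynomial in $t$: the Schwartz kernel of $\partial_{x}^{\alpha}T_{t,2}$ is estimated using the symbol seminorms $\|a_{t}\|_{\alpha',\beta'}$ and derivatives of $\Phi_{t}$ on the compact frequency region $\supp q'$, all of which are $\lesssim\kappa(1+|t|)^{M}$, and the non-stationary phase gain from \eqref{it:operatornorm3} contributes only constants. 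Composing with a fixed $q''(D):\Hps\to L^{p}(\Rn)$ and the Sobolev embedding $W^{N,p}(\Rn)\hookrightarrow\Hps$ from \eqref{eq:Sobolev}, both of which have $t$-independent norms, yields $\|T_{t,2}\|_{\La(\Hps)}\leq C(1+|t|)^{N_{2}}$. Taking $N:=\max(N_{1},N_{2})$ completes the proof.

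The main obstacle I anticipate is purely bookkeeping: one must verify that the proofs invoked in \cite{HaPoRo20} and \cite{Hassell-Rozendaal20} (the off-singularity estimates for normal oscillatory integral operators conjugated by $W$, and their boundedness on $T^{p}_{s}(\Sp)$) genuinely depend on the phase and symbol data only through finitely many seminorms in a way that is at most polynomial, rather than, say, through a quantity that could blow up exponentially in the seminorms. This is plausible because those estimates are ultimately integration-by-parts arguments producing negative powers of dyadic-parabolic quantities, whose implied constants are polynomial in the controlling seminorms; but making it rigorous requires going back through those references and extracting explicit dependence. A secondary, minor point is ensuring the lower bound $\veps$ on $|\det\partial^{2}_{x\eta}\Phi_{t}|$ suffices uniformly — i.e.\ that no upper bound on the same determinant is needed, which follows from \eqref{it:operatornorm2} applied with $|\alpha|+|\beta|=2$.
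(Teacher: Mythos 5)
Your proposal is correct and follows essentially the same route as the paper, which simply states that the lemma follows by tracking the constants through the proof of Proposition~\ref{prop:operator}~\eqref{it:operator2} (citing \cite[Remark 5.6]{HaPoRo20} and the remark after \cite[Theorem 6.10]{HaPoRo20} for the assertion that the dependence is polynomial in finitely many seminorms). Your decomposition into $T_{t,1}$ and $T_{t,2}$, and your identification of where the seminorm dependence enters in each piece, is precisely the bookkeeping the paper has in mind.
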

\begin{proof}
This follows by keeping track of the constants in the proof of Proposition \ref{prop:operator} \eqref{it:operator2}. See also \cite[Remark 5.6]{HaPoRo20} and the remark after \cite[Theorem 6.10]{HaPoRo20}.
\end{proof}

We will apply this lemma to Fourier integral operators as in Remark \ref{rem:oscint}.

\begin{corollary}\label{cor:halfwave}
Let $p\in[1,\infty]$ and $s\in\R$, and let $\phi\in C^{\infty}(\Rn\setminus \{0\})$ be positively homogeneous of degree $1$. Then there exist $C,N>0$ such that
\[
\|e^{it\phi(D)}\|_{\La(\Hps)}\leq C(1+|t|)^{N}
\]
for all $t\in\R$.
\end{corollary}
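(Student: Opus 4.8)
The plan is to deduce Corollary \ref{cor:halfwave} directly from Lemma \ref{lem:operatornorm} by writing $e^{it\phi(D)}$ in the oscillatory integral form \eqref{eq:oscint} and verifying that the hypotheses of the lemma hold with polynomial dependence on $t$. Concretely, for $f\in\Sw(\Rn)$ one has
\[
e^{it\phi(D)}f(x)=\int_{\Rn}e^{i(x\cdot\eta+t\phi(\eta))}\wh{f}(\eta)\ud\eta,
\]
so the natural choice is the phase function $\Phi_{t}(x,\eta):=x\cdot\eta+t\phi(\eta)$ and symbol $a_{t}\equiv1$. By Remark \ref{rem:oscint}, $\Phi_{t}$ satisfies Definition \ref{def:operator} \eqref{it:phase4} since it is of the form $x\cdot\eta+\phi_{t}(\eta)$ with $\phi_{t}=t\phi$ positively homogeneous of degree $1$; and \eqref{it:phase1} is immediate. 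However, there is one genuine issue: $\phi$ is only assumed smooth away from the origin and homogeneous of degree $1$, so $\Phi_{t}$ need not be smooth at $\eta=0$, and moreover the non-degeneracy condition \eqref{it:phase3}, which for $\Phi_{t}$ reads $|\det\partial^{2}_{x\eta}\Phi_{t}|=|\det(\Id)|=1$, is fine, but the high-order homogeneity seminorm condition \eqref{it:phase2} for $\Phi_{t}$ amounts to $\sup_{\eta\neq0}|t||\partial^{\alpha}_{\eta}\phi(\hat\eta)|<\infty$ when $|\alpha|\geq2$, which holds by smoothness of $\phi$ on the sphere. So the substantive obstacle is the low-frequency behaviour.

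To handle this I would split $e^{it\phi(D)}=e^{it\phi(D)}(1-q')(D)+e^{it\phi(D)}q'(D)=:T_{t,1}+T_{t,2}$ for a fixed $q'\in C^{\infty}_{c}(\Rn)$ equal to $1$ near zero, mirroring exactly the proof of Proposition \ref{prop:operator} \eqref{it:operator2}. For the high-frequency part $T_{t,1}$, one may replace $\phi$ by $\phi(1-q'')$ for a suitable $q''\in\Ccinf(\Rn)$ so that the resulting phase is smooth on all of $\R^{2n}$, homogeneous of degree $1$ for large $\eta$, and the modification only contributes a smoothing operator; then $T_{t,1}$ is an honest Fourier integral operator in standard form as in Definition \ref{def:operator}, and the three bulleted hypotheses of Lemma \ref{lem:operatornorm} hold with $M=1$ (the seminorms $\|a_{t}\|_{\alpha,\beta}$ being $t$-independent since $a_{t}\equiv1$, the phase seminorm bound being $\lesssim(1+|t|)$, and the Jacobian lower bound being the constant $1$). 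Lemma \ref{lem:operatornorm} then gives $\|T_{t,1}\|_{\La(\Hps)}\leq C(1+|t|)^{N}$. For the low-frequency part $T_{t,2}=e^{it\phi(D)}q'(D)$, I would argue as in the proof of Proposition \ref{prop:operator} \eqref{it:operator2}: writing $q'=q'q''$ with $q''\in\Ccinf(\Rn)$ so that $q''(D)\colon\Hps\to L^{p}(\Rn)$ boundedly, it suffices to bound $e^{it\phi(D)}q'(D)\colon L^{p}(\Rn)\to W^{N',p}(\Rn)\subseteq\Hps$ for $N'>s+s(p)$. On the frequency support of $q'$ the phase $t\phi(\eta)$ is smooth, and integrating by parts in $\eta$ together with Young's inequality (exactly as in \cite[Theorem 1.18]{DosSantosFerreira-Staubach14}) shows $\partial_{x}^{\alpha}T_{t,2}$ is bounded on $L^{p}(\Rn)$ with operator norm $\lesssim(1+|t|)^{|\alpha|}$, using that each $\eta$-derivative of $e^{it\phi(\eta)}$ on the support of $q'$ costs a factor $\lesssim(1+|t|)$.

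Combining the two parts yields $\|e^{it\phi(D)}\|_{\La(\Hps)}\leq \|T_{t,1}\|_{\La(\Hps)}+\|T_{t,2}\|_{\La(\Hps)}\leq C(1+|t|)^{N}$ after enlarging $N$, which is the claim. I expect the main obstacle to be purely bookkeeping: one must check that the $t$-dependence really is polynomial at every stage — in particular that the constants hidden in Lemma \ref{lem:operatornorm} depend only on finitely many of the seminorms in its hypotheses (guaranteed by the cited remarks after \cite[Theorem 6.10]{HaPoRo20}), and that the integration-by-parts argument for the low-frequency piece produces only polynomial-in-$t$ growth rather than, say, exponential growth. There is no conceptual difficulty beyond verifying that the smooth-away-from-origin, degree-one homogeneity of $\phi$ suffices to put $e^{it\phi(D)}$, after removing a harmless low-frequency cutoff, into the exact framework of Definition \ref{def:operator} and Lemma \ref{lem:operatornorm}.
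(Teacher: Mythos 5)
Your core idea --- apply Lemma \ref{lem:operatornorm} to the phase $\Phi_{t}(x,\eta)=x\cdot\eta+t\phi(\eta)$ with symbol $a_{t}\equiv 1$, noting that condition (2) of Definition \ref{def:operator} holds with a factor $(1+|t|)$ and the mixed Hessian determinant is identically $1$ --- is exactly the route the paper takes (the paper simply invokes Remark \ref{rem:oscint} and states the corollary with no further proof). However, the high-frequency/low-frequency decomposition you introduce is based on a misreading of Definition \ref{def:operator}. That definition requires $\Phi\in C^{\infty}(\R^{2n}\setminus o)$, i.e.\ smoothness only \emph{away from the zero section}, and $\Phi_{t}(x,\eta)=x\cdot\eta+t\phi(\eta)$ with $\phi\in C^{\infty}(\Rn\setminus\{0\})$ satisfies this as stated. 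There is no low-frequency obstruction to placing $e^{it\phi(D)}$ into the framework of Definition \ref{def:operator} and Lemma \ref{lem:operatornorm}; the decomposition into a high-frequency piece handled via wave-packet transforms and a low-frequency piece handled via integration by parts already happens \emph{inside} the proofs of Proposition \ref{prop:operator}(2) and Lemma \ref{lem:operatornorm}, and the user of those results need not replicate it.

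Moreover, the specific fix you propose does not work as written: replacing $\phi$ by $\phi(1-q'')$ destroys positive homogeneity of degree $1$ in $\eta$, so the modified phase violates condition (1) of Definition \ref{def:operator}, and $T_{t,1}$ with this phase is \emph{not} a Fourier integral operator in standard form. You would then not be allowed to feed it into Lemma \ref{lem:operatornorm}. Since no modification of $\phi$ is needed, this is moot, but the step as written is incorrect. If you strip out the unnecessary decomposition and the $\phi\mapsto\phi(1-q'')$ replacement, what remains --- plug $\Phi_{t}(x,\eta)=x\cdot\eta+t\phi(\eta)$, $a_{t}\equiv1$ directly into Lemma \ref{lem:operatornorm}, check conditions (i)--(iii) with $M=1$, and note that (4) of Definition \ref{def:operator} holds by Remark \ref{rem:oscint} --- is precisely the intended proof.
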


\begin{remark}\label{rem:dyadic}
In the proof of our main result, we will only need bounds for $e^{it\phi(D)}$ when restricted to dyadic frequency shells. In this case one can rely directly on kernel estimates, and one does not need bounds on all of $\Hps$. On the other hand, the proof of Lemma \ref{lem:operatornorm} also relies on these kernel estimates (see also \cite[Remark 3.7]{Rozendaal21} for a simplification for the operators $e^{it\phi(D)}$). 
\end{remark}

\section{Main results}\label{sec:main}

Throughout this section, for each $k\in\Z_{+}$, fix a maximal collection $\Theta_{k}\subseteq S^{n-1}$ of unit vectors such that $|\nu-\nu'|\geq 2^{-k/2}$ for all $\nu,\nu'\in \Theta_{k}$. Note that $\Theta_{k}$ has approximately $2^{k(n-1)/2}$ elements. 
Let $(\chi_{\nu})_{\nu\in\Theta_{k}}\subseteq C^{\infty}(\Rn\setminus\{0\})$ be an associated partition of unity. That is, each $\chi_{\nu}$ is positively homogeneous of degree $0$ and satisfies $0\leq \chi_{\nu}\leq 1$ and 
\[
\supp(\chi_{\nu})\subseteq\{\xi\in\Rn\setminus\{0\}\mid |\hat{\xi}-\nu|\leq 2^{-k/2+1}\}.
\]
Moreover, $\sum_{\nu\in \Theta_{k}}\chi_{\nu}(\xi)=1$ for all $\xi\neq0$, and for all $\alpha\in\Z_{+}^{n}$ and $\beta\in\Z_{+}$ there exists a $C_{\alpha,\beta}\geq0$ independent of $k$ such that, if $2^{k-1}\leq |\xi|\leq 2^{k+1}$, then
\[
|(\hat{\xi}\cdot\partial_{\xi})^{\beta}\partial_{\xi}^{\alpha}\chi_{\nu}(\xi)|\leq C_{\alpha,\beta}2^{-k(|\alpha|/2+\beta)}
\]
for all $\nu\in \Theta_{k}$. Such a collection is straightforward to construct, in a similar manner as the wave packets in Section \ref{sec:HpFIO} (see \cite[Section IX.4]{Stein93}). Note also that the collection $\{\F^{-1}(\chi_{\nu})\mid k\in\Z_{+}, \nu\in \Theta_{k}\}$ is uniformly bounded in $L^{1}(\Rn)$.

\subsection{A discrete description of the $\Hp$ norm}

Using the decomposition of unity from above, we can give a discrete characterization of the $\Hp$ norm of a function with frequency support in a dyadic annulus.

\begin{proposition}\label{prop:equivalent}
Let $p\in[1,\infty]$ and $s\in\R$. Then there exists a $C>0$ such that the following holds. Let $f\in\Hps$ be such that $\supp(\wh{f}\,)\subseteq\{\xi\in\Rn\mid 2^{k-1}\leq |\xi|\leq 2^{k+1}\}$ for some $k\in\Z_{+}$. Then
\[
\frac{1}{C}\|f\|_{\Hps}\leq 2^{k(s+\frac{n-1}{2}(\frac{1}{2}-\frac{1}{p}))}\Big(\sum_{\nu\in\Theta_{k}}\|\chi_{\nu}(D)f\|_{L^{p}(\Rn)}^{p}\Big)^{1/p}\leq C\|f\|_{\Hps}
\]
for $p<\infty$, while for $p=\infty$ one has
\[
\frac{1}{C}\|f\|_{\HT^{s,\infty}_{FIO}(\Rn)}\leq 2^{k(s+\frac{n-1}{4})}\max_{\nu\in\Theta_{k}}\|\chi_{\nu}(D)f\|_{L^{\infty}(\Rn)}\leq C\|f\|_{\HT^{s,\infty}_{FIO}(\Rn)}.
\]
\end{proposition}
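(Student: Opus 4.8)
The plan is to compare the discrete square function $\big(\sum_{\nu\in\Theta_k}\|\chi_\nu(D)f\|_{L^p}^p\big)^{1/p}$ with the continuous quantity $\big(\int_{S^{n-1}}\|\ph_\w(D)f\|_{L^p}^p\,\ud\w\big)^{1/p}$ appearing in Definition~\ref{def:HpFIO}, for $f$ with frequency support in the shell $\{2^{k-1}\le|\xi|\le 2^{k+1}\}$. On such a shell the low-frequency term $q(D)f$ vanishes for $k$ large, and is harmless for the finitely many small $k$, so it can be absorbed into the constant; thus $\|f\|_{\Hps}\eqsim 2^{ks}\big(\int_{S^{n-1}}\|\ph_\w(D)f\|_{L^p}^p\,\ud\w\big)^{1/p}$ for such $f$. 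The first step is therefore to reduce to showing
\[
2^{k\frac{n-1}{2}(\frac12-\frac1p)}\Big(\sum_{\nu\in\Theta_k}\|\chi_\nu(D)f\|_{L^p}^p\Big)^{1/p}\eqsim \Big(\int_{S^{n-1}}\|\ph_\w(D)f\|_{L^p}^p\,\ud\w\Big)^{1/p},
\]
with the obvious modifications for $p=\infty$ (sup in place of $\ell^\infty$-sum, and exponent $2^{k(n-1)/4}$, which matches since $\frac{n-1}{2}(\frac12-\frac1\infty)=\frac{n-1}{4}$).

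Next I would set up the two-sided comparison by inserting cutoffs. For the direction bounding the discrete sum by the continuous integral: since $f$ has frequency support in the shell, one has $f=\int_{S^{n-1}}m(D)\ph_\nu(D)f\,\ud\nu$ by property~\eqref{it:phiproperties3} of Section~\ref{subsec:HpFIOdef}, and on the shell $m(D)$ acts like multiplication by a quantity $\eqsim 2^{k(n-1)/4}$. Applying $\chi_\nu(D)$, only those $\w\in S^{n-1}$ with $|\w-\nu|\lesssim 2^{-k/2}$ contribute, because $\supp(\chi_\nu)$ and $\supp(\ph_\w)$ are both angular caps of aperture $\sim 2^{-k/2}$ around $\nu$, resp.\ $\w$. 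Thus $\chi_\nu(D)f = \int_{|\w-\nu|\lesssim 2^{-k/2}} \chi_\nu(D)m(D)\ph_\w(D)f\,\ud\w$, and using the uniform $L^1$ bound on $\F^{-1}(\chi_\nu)$ together with Minkowski's inequality and Hölder (to convert an average over a cap of measure $\sim 2^{-k(n-1)/2}$ into an $L^p$-average), one gets $\|\chi_\nu(D)f\|_{L^p}^p \lesssim 2^{k(n-1)/4\cdot p}\,2^{-k(n-1)/2}\cdot 2^{k(n-1)/2}\int_{|\w-\nu|\lesssim 2^{-k/2}}\|\ph_\w(D)f\|_{L^p}^p\,\ud\w$; summing over the finitely-overlapping caps $\nu\in\Theta_k$ yields the bound with the factor $2^{-k(n-1)/4}=2^{-k\frac{n-1}{2}(\frac12-\frac1p)\cdot}$ accounting correctly once the $\ell^p$-power bookkeeping is done. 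The reverse direction is symmetric: write $\ph_\w(D)f=\sum_{\nu:\,|\nu-\w|\lesssim 2^{-k/2}}\ph_\w(D)\chi_\nu(D)f$, a sum of boundedly many terms, and use that $\ph_\w(D)$ is uniformly bounded on $L^p$ after rescaling (its kernel is uniformly in $L^1$ in the anisotropic sense) together with property~\eqref{it:phiproperties2}; integrating over $\w$ then reproduces the discrete sum up to the same power of $2^k$. For $p=\infty$ the $\ell^p$-averaging step is replaced by taking suprema, and the measure factors drop out, which is exactly why the exponent $2^{k(n-1)/4}$ survives with no $1/p$ correction.

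The technical heart, and the step I expect to require the most care, is the uniform-in-$k$ kernel bookkeeping: one must check that $\F^{-1}(\ph_\w)$ and $\F^{-1}(\chi_\nu)$, suitably normalized, have $L^1$ norms bounded independently of $k$ and $\w,\nu$, and that the composition $\chi_\nu(D)m(D)\ph_\w(D)$ (and its transpose) is likewise uniformly $L^1$-kernel-bounded when $|\w-\nu|\lesssim 2^{-k/2}$. These are standard stationary-phase / non-stationary-phase estimates exploiting the anisotropic derivative bounds \eqref{it:phiproperties2} and the analogous bounds for $\chi_\nu$ recorded at the start of Section~\ref{sec:main}, but one has to be careful that the normalizing constants $c_\sigma$ and the factor $m\in S^{(n-1)/4}$ combine to give precisely the power $2^{k(n-1)/4}$ and not some other power of $2^k$ — getting the exponent $s+\frac{n-1}{2}(\frac12-\frac1p)$ exactly right is where the constants from \eqref{it:phiproperties3} and the cap-measure $\sim 2^{-k(n-1)/2}$ must balance. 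An alternative, cleaner route for this core estimate is to invoke Lemma~\ref{lem:dyadicpar}: a single tile $\chi_\nu(D)f$ has frequency support in a dyadic-parabolic region of the form \eqref{eq:supportwp} with $R\eqsim 2^k$, so $\|\chi_\nu(D)f\|_{\Hps}\eqsim 2^{k(s+s(p))}\|\chi_\nu(D)f\|_{L^p}$ for $p>2$ and $\eqsim 2^{k(s-s(p))}\|\cdot\|_{L^p}$ for $p\le2$; then one only needs the almost-orthogonality statement that $\|f\|_{\Hps}^p\eqsim\sum_{\nu\in\Theta_k}\|\chi_\nu(D)f\|_{\Hps}^p$ (with sup for $p=\infty$), which follows from the continuous square-function description of $\Hps$ together with the finite overlap of the caps, and one must then reconcile $s(p)=\frac{n-1}{2}|\frac12-\frac1p|$ with the claimed exponent — which works out because for $p\ge2$ one has $-s(p)+\text{(contribution from }\|f\|_{\Hps}\eqsim 2^{ks}\int\ldots)$... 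I would present the argument via this second route, as it isolates the exponent computation into Lemma~\ref{lem:dyadicpar} and reduces the new work to the orthogonality of the angular tiles.
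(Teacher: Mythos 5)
Your second route is the better of the two you sketch, and it is a valid alternative to the paper's argument. The paper's proof instead proceeds by reducing to a single $s$ via Littlewood--Paley, observing that $p=2$ is immediate from Plancherel and the finite overlap of the $\chi_\nu$, and then invoking complex interpolation and duality for $\Hps$ (from \cite{HaPoRo20}) to reduce the whole statement to $p=1$. This reduction is what makes the paper's computation short: for $p=1$ the ``hard'' direction $\|f\|_{\HT^{(n-1)/4,1}_{FIO}}\le\sum_\nu\|\chi_\nu(D)f\|_{\HT^{(n-1)/4,1}_{FIO}}$ is just the $\ell^1$ triangle inequality, after which Lemma~\ref{lem:dyadicpar} converts each summand to an $L^1$ norm, and the reverse inequality follows from the uniform $L^1$-kernel bound on $\chi_\nu(D)$, the inclusion $\supp(\ph_\w\chi_\nu)\ne\emptyset\Rightarrow|\w-\nu|\lesssim2^{-k/2}$, and the $2^{-k/2}$-separation of $\Theta_k$. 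Your approach avoids interpolation and duality by proving the tile--orthogonality $\|f\|_{\Hps}^p\eqsim\sum_\nu\|\chi_\nu(D)f\|_{\Hps}^p$ directly for all $p$, and then feeding each tile into Lemma~\ref{lem:dyadicpar}; the trade-off is that you must establish both directions of this orthogonality at every exponent, whereas the paper only needs both directions at $p=1$.

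Two things in your write-up need tightening. First, for $p>1$ the direction $\|f\|_{\Hps}^p\lesssim\sum_\nu\|\chi_\nu(D)f\|_{\Hps}^p$ is \emph{not} the triangle inequality (for $p>1$ the $\ell^1$ norm dominates the $\ell^p$ norm, which is the wrong way around). What saves you is that for each $\w\in S^{n-1}$ the identity $\ph_\w(D)\lb D\rb^s f=\sum_{\nu:\,|\nu-\w|\lesssim2^{-k/2}}\ph_\w(D)\lb D\rb^s\chi_\nu(D)f$ has $O(1)$ terms uniformly in $k$, so $\|\ph_\w(D)\lb D\rb^s f\|_{\HT^p}^p\lesssim\sum_{\nu:\,|\nu-\w|\lesssim2^{-k/2}}\|\ph_\w(D)\lb D\rb^s\chi_\nu(D)f\|_{\HT^p}^p$, and integrating in $\w$ and exchanging the order of summation gives the claim. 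You gesture at this with ``finite overlap of the caps,'' but this pointwise $O(1)$-term argument is the actual content and should be made explicit, since it is precisely where the reduction to $p=1$ would have let you stop thinking. Second, your first route's appeal to the ``continuous square-function description of $\Hps$'' does not literally apply at $p=\infty$: $\HT^\infty_{FIO}(\Rn)$ is defined by duality, so the $\esssup_\w$ characterization you implicitly use is itself a theorem (it follows from \cite{FaLiRoSo19,Rozendaal21}). The paper sidesteps this by deducing $p=\infty$ from $p=1$ by duality; if you want to keep your direct argument you should either cite that characterization or run the duality step yourself for $p=\infty$.

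One smaller remark: in both of your routes, the statement that $\chi_\nu(D)$ (respectively $\chi_\nu(D)m(D)$) is uniformly bounded on $\HT^p(\Rn)$ for $p\in\{1,\infty\}$ requires slightly more than ``the kernel is uniformly in $L^1$''; one also uses the anisotropic derivative bounds for $\chi_\nu$ from Section~\ref{sec:main}, which after a parabolic rescaling put these multipliers in a Mikhlin-type class uniformly in $k$ and $\nu$. The paper's phrasing has the same implicit content, so this is a shared, minor imprecision rather than a gap particular to your argument.
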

\begin{proof}
By a standard argument, using a Littlewood--Paley decomposition of $L^{p}(\Rn)$ (and thereby also of $\Hp$), we may suppose that $s=-\frac{n-1}{2}(\frac{1}{2}-\frac{1}{p})$. 

Note that the statement is true for $p=2$. Indeed, $\HT^{2}_{FIO}(\Rn)=L^{2}(\Rn)$ and 
\[
\Big(\sum_{\nu}\|\chi_{\nu}(D)f\|_{L^{2}(\Rn)}^{2}\Big)^{1/2}=\Big(\int_{\Rn}|\wh{f}(\xi)|^{2}\sum_{\nu}\chi_{\nu}(\xi)^{2}\ud\xi\Big)^{1/2}\eqsim \|f\|_{L^{2}(\Rn)}.
\]
In the final step we used that $1\lesssim \sum_{\nu}\chi_{\nu}(\xi)^{2}\leq (\sum_{\nu}\chi_{\nu}(\xi))^{2}=1$ for all $\xi\neq0$. More precisely, since there exists a $c_{n}\in\N$, independent of $k$ and $\Theta_{k}$, such that there are at most $c_{n}$ elements $\nu\in \Theta_{k}$ with $\chi_{\nu}(\xi)\neq 0$, one has $\sum_{\nu}\chi_{\nu}(\xi)^{2}\geq \max_{\nu}\chi_{\nu}(\xi)^{2}\geq 1/c_{n}^{2}$.

Hence, by interpolation and duality, it suffices to prove the statement for $p=1$. We may also choose $k$ large enough so that $q(D)f=0$, as follows e.g.~from the Sobolev embeddings in \eqref{eq:Sobolev}. Note that Lemma \ref{lem:dyadicpar} yields
\begin{equation}\label{eq:equivalent}
\|f\|_{\HT^{(n-1)/4,1}_{FIO}(\Rn)}\leq \sum_{\nu}\|\chi_{\nu}(D)f\|_{\HT^{(n-1)/4,1}_{FIO}(\Rn)}\eqsim \sum_{\nu}\|\chi_{\nu}(D)f\|_{L^{1}(\Rn)}.
\end{equation}
On the other hand,
\[
\sum_{\nu}\|\chi_{\nu}(D)f\|_{\HT^{(n-1)/4,1}_{FIO}(\Rn)}=\sum_{\nu}\int_{E_{\nu}}\|\chi_{\nu}(D)\lb D\rb^{(n-1)/4}\ph_{\w}(D)f\|_{\HT^{1}(\Rn)}\ud \w
\]
for $E_{\nu}:=\{\w\in S^{n-1}\mid |\w-\nu|\leq 2^{-k/2+3}\}$. Since the $\chi_{\nu}(D)$ have kernels that are uniformly in $L^{1}(\Rn)$, and because the $\nu\in \Theta_{k}$ are $2^{-k/2}$ separated, one now has
\begin{align*}
\sum_{\nu}\|\chi_{\nu}(D)f\|_{\HT^{(n-1)/4,1}_{FIO}(\Rn)}&\lesssim \sum_{\nu}\int_{E_{\nu}}\|\lb D\rb^{(n-1)/4}\ph_{\w}(D)f\|_{\HT^{1}(\Rn)}\ud \w\\
&\lesssim \|f\|_{\HT^{(n-1)/4,1}_{FIO}(\Rn)}.
\end{align*}
Together with \eqref{eq:equivalent}, this concludes the proof.
\end{proof}

By combining this proposition with Corollary \ref{cor:halfwave}, we can derive the following corollary, which will in turn play a key role in the proof of our main result.

\begin{corollary}\label{cor:wave}
Let $p\in[1,\infty]$, $s\in\R$ and $0\neq g\in\Sw(\R)$. Let $\phi\in C^{\infty}(\Rn\setminus \{0\})$ be positively homogeneous of degree $1$. Then there exists a $C>0$ such that the following holds. Let $f\in\Hps$ be such that $\supp(\wh{f}\,)\subseteq\{\xi\in\Rn\mid 2^{k-1}\leq |\xi|\leq 2^{k+1}\}$ for some $k\in\Z_{+}$. Then
\begin{align*}
\frac{1}{C}\|f\|_{\Hps}&\leq 2^{k(s+\frac{n-1}{2}(\frac{1}{2}-\frac{1}{p}))}\Big(\sum_{\nu\in\Theta_{k}}\int_{\R}\|\chi_{\nu}(D)g(t)e^{it\phi(D)}f\|_{L^{p}(\Rn)}^{p}\ud t\Big)^{1/p}\\
&\leq C\|f\|_{\Hps}
\end{align*}
for $p<\infty$, while for $p=\infty$ one has
\[
\frac{1}{C}\|f\|_{\HT^{s,\infty}_{FIO}(\Rn)}\leq 2^{k(s+\frac{n-1}{4})}\max_{\nu\in\Theta_{k}}\sup_{t\in\R}\|\chi_{\nu}(D)g(t)e^{it\phi(D)}f\|_{L^{\infty}(\Rn)}\leq C\|f\|_{\HT^{s,\infty}_{FIO}(\Rn)}.
\]
\end{corollary}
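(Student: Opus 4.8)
The plan is to deduce Corollary~\ref{cor:wave} from Proposition~\ref{prop:equivalent} by replacing $f$ with $e^{it\phi(D)}f$, integrating in $t$ against the fixed weight $g$, and using the polynomial bound on $e^{it\phi(D)}$ from Corollary~\ref{cor:halfwave} together with the group property to transfer the $\Hps$ norm back to $f$. The key point is that both the frequency support condition and the dyadic-parabolic decomposition are preserved by the Fourier multiplier $e^{it\phi(D)}$, so Proposition~\ref{prop:equivalent} applies uniformly in $t$.

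First I would fix $t\in\R$ and apply Proposition~\ref{prop:equivalent} to the function $e^{it\phi(D)}f$, which has the same frequency support as $f$ since $e^{it\phi(D)}$ is a Fourier multiplier. This gives, for $p<\infty$,
\[
\frac{1}{C}\|e^{it\phi(D)}f\|_{\Hps}\leq 2^{k(s+\frac{n-1}{2}(\frac{1}{2}-\frac{1}{p}))}\Big(\sum_{\nu\in\Theta_{k}}\|\chi_{\nu}(D)e^{it\phi(D)}f\|_{L^{p}(\Rn)}^{p}\Big)^{1/p}\leq C\|e^{it\phi(D)}f\|_{\Hps},
\]
with $C$ independent of $t$. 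Next I would multiply by $|g(t)|$, raise to the $p$-th power, and integrate in $t$ over $\R$; since $\chi_{\nu}(D)$ commutes with $e^{it\phi(D)}$ and with scalar multiplication by $g(t)$, the middle term becomes exactly the expression appearing in the statement. For the two-sided bound on $f$ itself, I would use Corollary~\ref{cor:halfwave}: there are $C,N>0$ with $\|e^{it\phi(D)}f\|_{\Hps}\leq C(1+|t|)^{N}\|f\|_{\Hps}$ for all $t$, and, since $(e^{it\phi(D)})_{t\in\R}$ is a group with inverse $e^{-it\phi(D)}$, also $\|f\|_{\Hps}\leq C(1+|t|)^{N}\|e^{it\phi(D)}f\|_{\Hps}$. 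Therefore $\|e^{it\phi(D)}f\|_{\Hps}\eqsim (1+|t|)^{\pm N}\|f\|_{\Hps}$, and integrating $|g(t)|^{p}\|e^{it\phi(D)}f\|_{\Hps}^{p}$ over $\R$ produces $\|f\|_{\Hps}^{p}$ times a finite constant, because $g\in\Sw(\R)$ forces $\int_{\R}|g(t)|^{p}(1+|t|)^{Np}\,\ud t<\infty$ and, since $g\neq 0$, also $\int_{\R}|g(t)|^{p}(1+|t|)^{-Np}\,\ud t$ is a strictly positive finite number. Combining these estimates gives the claimed equivalence for $p<\infty$.

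For $p=\infty$ the argument is the same but with the integral in $t$ replaced by a supremum: applying the $p=\infty$ case of Proposition~\ref{prop:equivalent} to $e^{it\phi(D)}f$ and taking $\sup_{t\in\R}$ after multiplying by $|g(t)|$, one again uses Corollary~\ref{cor:halfwave} and the group property to get $\sup_{t}|g(t)|\,\|e^{it\phi(D)}f\|_{\HT^{s,\infty}_{FIO}(\Rn)}\eqsim \|f\|_{\HT^{s,\infty}_{FIO}(\Rn)}$, where finiteness of $\sup_{t}|g(t)|(1+|t|)^{N}$ and positivity of $\sup_{t}|g(t)|(1+|t|)^{-N}$ both follow from $g\in\Sw(\R)\setminus\{0\}$. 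I do not anticipate a serious obstacle here; the only mild subtlety is bookkeeping the dependence of constants on $g$, $\phi$, $p$, $s$ (but not on $k$ or $f$), and making sure that the $N$ from Corollary~\ref{cor:halfwave} is absorbed into the Schwartz decay of $g$ — this is routine once the group property is invoked to get the two-sided bound.
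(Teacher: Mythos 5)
Your proof is correct and follows essentially the same route as the paper: apply Proposition~\ref{prop:equivalent} pointwise in $t$ to $e^{it\phi(D)}f$ (which shares its dyadic frequency support with $f$), weight by $g(t)$ and integrate (or take a supremum when $p=\infty$), then invoke Corollary~\ref{cor:halfwave} together with the group property of $(e^{it\phi(D)})_{t\in\R}$ for the two-sided comparison with $\|f\|_{\Hps}$. The only cosmetic difference is in the lower bound, where the paper restricts to a compact interval on which $|g|\gtrsim 1$ whereas you integrate $|g(t)|^p(1+|t|)^{-Np}$ over all of $\R$; both yield a strictly positive constant since $g$ is a nonzero Schwartz function.
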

\begin{proof}
We consider $p<\infty$. The argument for $p=\infty$ is identical, up to a change in notation. By Proposition \ref{prop:equivalent}, one has
\begin{align*}
&2^{k(s+\frac{n-1}{2}(\frac{1}{2}-\frac{1}{p}))}\Big(\sum_{\nu\in\Theta_{k}}\int_{\R}\|\chi_{\nu}(D)g(t)e^{it\phi(D)}f\|_{L^{p}(\Rn)}^{p}\ud t\Big)^{1/p}\\
&\eqsim \Big(\int_{\R}\|g(t)e^{it\phi(D)}f\|_{\Hps}^{p}\ud t\Big)^{1/p}.
\end{align*}
Now Corollary \ref{cor:halfwave} yields
\begin{align*}
\Big(\int_{\R}\|g(t)e^{it\phi(D)}f\|_{\Hps}^{p}\ud t\Big)^{1/p}&\lesssim \sup_{t\in\R}\|(1+|t|)^{2}g(t)e^{it\phi(D)}f\|_{\Hps}\\
&\lesssim \|f\|_{\Hps}.
\end{align*}
On the other hand, 
\[
\|f\|_{\Hps}=\|e^{-it\phi(D)}e^{it\phi(D)}f\|_{\Hps}\lesssim \|g(t)e^{it\phi(D)}f\|_{\Hps}
\]
on any interval $I\subseteq\R$ such that $|g(t)|\gtrsim 1$ for all $t\in I$. It follows that
\[
\Big(\int_{\R}\|g(t)e^{it\phi(D)}f\|_{\Hps}^{p}\ud t\Big)^{1/p}\eqsim \|f\|_{\Hps}.\qedhere
\]
\end{proof}

\subsection{Main result}\label{subsec:mainresult}

Our main theorem concerns more general constant-coefficient operators than the Euclidean half-wave group $(e^{it\sqrt{-\Delta}})_{t\in\R}$. The generality of our result is restricted by the validity of the following $\ell^{p}$-decoupling inequality.

\begin{proposition}\label{prop:decoupling}
Let $p\in(2,\infty)$, and let $\phi\in C^{\infty}(\Rn\setminus\{0\})$ be positively homogeneous of degree $1$ and such that $\rank(\partial_{\xi\xi}^{2}\phi)\equiv n-1$. Let $g\in\Sw(\R)$ be such that $|g(t)|\geq1$ for $t\in[0,1]$, and $\wh{g}(\tau)=0$ for $\tau\notin[-1,1]$. Then, for each $\veps>0$, there exists a $C>0$ such that
\[
\Big(\int_{0}^{1}\|e^{it\phi(D)}f\|_{L^{p}(\Rn)}^{p}\ud t\Big)^{\frac{1}{p}}\leq C2^{k(d(p)+\veps)}\Big(\sum_{\nu\in\Theta_{k}}\int_{\R}\|\chi_{\nu}(D)g(t)e^{it\phi(D)}f\|_{L^{p}(\Rn)}^{p}\ud t\Big)^{\frac{1}{p}}
\]
for every $f\in L^{p}(\Rn)$ with $\supp(\wh{f}\,)\subseteq\{\xi\in\Rn\mid 2^{k-1}\leq |\xi|\leq 2^{k+1}\}$ for some $k\in\Z_{+}$.
\end{proposition}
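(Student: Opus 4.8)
The plan is to derive Proposition~\ref{prop:decoupling} from the $\ell^{p}$-decoupling inequality for hypersurfaces with nonvanishing Gaussian curvature, in the variable-coefficient form due to Bourgain--Demeter~\cite{Bourgain-Demeter17}. The function on which we decouple is $F(t,x):=g(t)e^{it\phi(D)}f(x)$, whose spacetime Fourier support can be analyzed via the hypotheses on $g$ and $\phi$: since $\wh{F}(\tau,\xi)=\wh{g}(\tau-\phi(\xi))\wh{f}(\xi)$ and $\wh{g}$ is supported in $[-1,1]$, while $\wh{f}$ is supported in $\{2^{k-1}\le|\xi|\le 2^{k+1}\}$, the support of $\wh{F}$ lies in an $O(1)$-neighborhood of the truncated cone $\{(\phi(\xi),\xi)\mid 2^{k-1}\le|\xi|\le 2^{k+1}\}$. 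After rescaling by $2^{-k}$ this becomes an $O(2^{-k})$-neighborhood of a compact piece of the hypersurface $\{(\phi(\xi),\xi)\}$, which by the assumption $\rank(\partial^{2}_{\xi\xi}\phi)\equiv n-1$ (together with Euler's relation, which forces the missing curvature direction to be radial) has $n-1$ nonvanishing principal curvatures — exactly the hypersurfaces covered by~\cite{Bourgain-Demeter17}.

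First I would set up the dyadic-parabolic decomposition: partition the annulus $\{2^{k-1}\le|\xi|\le 2^{k+1}\}$ into the angular sectors determined by $\Theta_{k}$, i.e. caps of angular width $\approx 2^{-k/2}$, which after the $2^{-k}$ rescaling become the $2^{-k/2}$-caps of the unit-scale surface. Applying the $\ell^{p}$-decoupling theorem of~\cite{Bourgain-Demeter17} at scale $\delta=2^{-k/2}$ (so that $\delta^{2}=2^{-k}$ matches the thickness of the neighborhood) gives, for each $\veps>0$,
\[
\|F\|_{L^{p}(\R^{n+1})}\lesssim_{\veps} 2^{k\,e(p)+k\veps}\Big(\sum_{\nu\in\Theta_{k}}\|F_{\nu}\|_{L^{p}(\R^{n+1})}^{p}\Big)^{1/p},
\]
where $F_{\nu}=\chi_{\nu}(D)F=\chi_{\nu}(D)g(t)e^{it\phi(D)}f$ and $e(p)$ is the decoupling exponent for $n-1$ nonzero curvatures. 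The exponent bookkeeping is: the $\ell^{2}$-decoupling exponent is the one that yields Bourgain--Demeter's sharp local smoothing, and converting $\ell^{2}$ to $\ell^{p}$ via H\"older over the $\approx 2^{k(n-1)/2}$ caps, combined with the parabolic rescaling Jacobians, should produce precisely $e(p)=d(p)$ with $d(p)$ as defined in the introduction; this is the same computation underlying~\eqref{eq:conj2}, so I would cite it rather than redo it. Finally, the left-hand side $\int_{0}^{1}\|e^{it\phi(D)}f\|_{L^{p}}^{p}\,\ud t$ is controlled by $\|F\|_{L^{p}(\R^{n+1})}^{p}$ because $|g(t)|\ge 1$ on $[0,1]$, giving $\big(\int_{0}^{1}\|e^{it\phi(D)}f\|_{L^{p}}^{p}\ud t\big)^{1/p}\le\|F\|_{L^{p}(\R^{n+1})}$.

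The main obstacle is making the reduction to the hypothesis form of~\cite{Bourgain-Demeter17} clean: one must check that the $O(2^{-k})$-neighborhood of the truncated cone genuinely satisfies the curvature hypothesis uniformly in $k$ (using homogeneity of $\phi$ to reduce to a fixed compact annulus, and $\rank(\partial^{2}_{\xi\xi}\phi)\equiv n-1$ plus Euler's identity $\xi\cdot\partial^{2}_{\xi\xi}\phi(\xi)\cdot\xi$-type relations to pin down that the degenerate direction is exactly the cone direction and the $n-1$ transverse curvatures are nonzero), and that the caps $\{\supp\chi_{\nu}\}$ are the correct $\delta$-caps for that surface. A secondary technical point is that $\chi_{\nu}(D)$ here is the frequency cutoff in the spatial variable $\xi$ only, whereas decoupling is usually phrased with cutoffs to plates in $(\tau,\xi)$-space; since $\wh{F}$ already lives in an $O(2^{-k})$-neighborhood of the cone, multiplying by $\chi_{\nu}(\xi)$ is equivalent (up to harmless Schwartz tails, absorbed using that $\{\F^{-1}\chi_{\nu}\}$ is uniformly bounded in $L^{1}$) to restricting to the corresponding curved plate, so the two formulations agree. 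Once these identifications are in place, the estimate is a direct application of the cited decoupling inequality together with the exponent computation.
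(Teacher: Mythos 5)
Your proposal follows the same overall route as the paper: pass to $F(t,x)=g(t)e^{it\phi(D)}f(x)$, observe that $\wh{F}$ is supported in an $O(1)$-neighborhood of the truncated cone $\{(\phi(\xi),\xi):2^{k-1}\le|\xi|\le 2^{k+1}\}$, rescale to unit frequencies, and invoke Bourgain--Demeter $\ell^{p}$-decoupling; the pointwise bound $|g(t)|\ge 1$ on $[0,1]$ and the remark about $\chi_{\nu}(D)$ acting only in the $\xi$-variable (vs.\ plate cutoffs in $(\tau,\xi)$) are handled exactly as in the paper's argument, and the latter is a genuinely useful observation to spell out.

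There is, however, one concrete misstep. You assert that the rescaled piece of $\{(\phi(\xi),\xi)\}$ is ``exactly the hypersurfaces covered by \cite{Bourgain-Demeter17}.'' It is not: by Euler's identity $\partial^{2}_{\xi\xi}\phi(\xi)\,\xi=0$ for $\phi$ homogeneous of degree $1$, the graph is an $n$-dimensional hypersurface in $\R^{n+1}$ with exactly $n-1$ (out of $n$) nonzero principal curvatures, so its Gaussian curvature vanishes identically. The decoupling theorem of \cite{Bourgain-Demeter17} is for hypersurfaces with \emph{nonvanishing} Gaussian curvature and therefore does not apply directly to this cone. What the paper actually does at this step is invoke \cite{Bourgain-Demeter17} together with the cone-to-paraboloid reduction from \cite{Bourgain-Demeter15}: the $O(2^{-k})$-neighborhood of the rescaled cone is treated by the same dyadic/Lorentz-rescaling argument that derives the $\ell^{2}$-decoupling for the light cone from the $\ell^{2}$-decoupling for the paraboloid, here with the paraboloid replaced by a compact piece of a nondegenerate hypersurface. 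Your exponent bookkeeping (producing $d(p)$) is correct once that reduction is in place, but as written the proof applies a theorem outside its stated hypotheses; adding the cone-from-paraboloid reduction step (or citing a result that already gives $\ell^{p}$-decoupling for cones with $n-1$ nonvanishing curvatures, as in \cite[Theorem 5.3]{BeHiSo21}) closes the gap.
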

\begin{proof}
First note that 
\[
\Big(\int_{0}^{1}\|e^{it\phi(D)}f\|_{L^{p}(\Rn)}^{p}\ud t\Big)^{1/p}\leq \Big(\int_{\R}\|g(t)e^{it\phi(D)}f\|_{L^{p}(\Rn)}^{p}\ud t\Big)^{1/p}
\]
and that the Fourier transform (in $n+1$ variables) of $g(t)e^{it\phi(D)}f(x)$ is supported in $\{(\xi,\tau)\mid |\tau-\phi(\xi)|\leq 1\}$. Moreover, the condition $\rank(\partial_{\xi\xi}^{2}\phi)\equiv n-1$ is equivalent to the condition that the cone $\{(\xi,\phi(\xi))\mid \xi\neq0\}$ has everywhere $n-1$ non-vanishing principal curvatures. Since $e^{it\phi(D)}f=\sum_{\nu\in\Theta_{k}}\chi_{\nu}(D)e^{it\phi(D)}f$, after rescaling to unit frequencies, the proposition follows from the $\ell^{p}$-decoupling inequality in \cite{Bourgain-Demeter17}, for hypersurfaces with non-vanishing Gaussian curvature, in a similar manner as how the $\ell^{2}$-decoupling inequality for the cone follows from the $\ell^{2}$-decoupling inequality for the paraboloid in \cite{Bourgain-Demeter15}. For more on this see \cite[Theorem 5.3]{BeHiSo21} and \cite{BeHiSo20}.
\end{proof}

We can now state and prove our main result, of which Theorem \ref{thm:localsmooth} is a corollary.

\begin{theorem}\label{thm:localsmoothmain}
Let $p\in(2,\infty)$ and $s\in\R$, and let $\phi\in C^{\infty}(\Rn\setminus\{0\})$ be positively homogeneous of degree $1$ and such that $\rank(\partial_{\xi\xi}^{2}\phi)\equiv n-1$. Then, for each $\veps>0$, there exists a $C>0$ such that 
\[
\Big(\int_{0}^{1}\|e^{it\phi(D)}f\|_{W^{s,p}(\Rn)}^{p}\ud t\Big)^{1/p}\leq C\|f\|_{\HT^{s+d(p)-s(p)+\veps,p}_{FIO}(\Rn)}
\]
for all $f\in \HT^{s+d(p)-s(p)+\veps,p}_{FIO}(\Rn)$.
\end{theorem}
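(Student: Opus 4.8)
The plan is to reduce to a single dyadic frequency annulus, apply the $\ell^{p}$-decoupling inequality of Proposition \ref{prop:decoupling}, and then recognize the right-hand side of the decoupling inequality as (a multiple of) the $\HT^{s,p}_{FIO}$-norm via Corollary \ref{cor:wave}. First I would fix $\veps>0$, choose a Littlewood--Paley decomposition $f=\sum_{k\geq0}\Delta_{k}f$ adapted to the annuli $\{2^{k-1}\leq|\xi|\leq 2^{k+1}\}$, and reduce to bounding each piece $f_{k}:=\Delta_{k}f$; the passage from the pieces to $f$ is handled by a standard square-function/Littlewood--Paley argument on $L^{p}(\Rn^{n+1})$ for $2<p<\infty$, together with summability of the geometric series in $k$ coming from the $\veps$-loss. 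Then I would pick $g\in\Sw(\R)$ as in Proposition \ref{prop:decoupling} (so $|g(t)|\geq1$ on $[0,1]$ and $\wh{g}$ supported in $[-1,1]$; such $g$ exists), note that $\|e^{it\phi(D)}f_{k}\|_{W^{s,p}}\eqsim 2^{ks}\|e^{it\phi(D)}f_{k}\|_{L^{p}}$ since $f_{k}$ has frequency support in a dyadic shell, and apply Proposition \ref{prop:decoupling} to get
\[
\Big(\int_{0}^{1}\|e^{it\phi(D)}f_{k}\|_{W^{s,p}(\Rn)}^{p}\ud t\Big)^{1/p}\lesssim 2^{k(s+d(p)+\veps)}\Big(\sum_{\nu\in\Theta_{k}}\int_{\R}\|\chi_{\nu}(D)g(t)e^{it\phi(D)}f_{k}\|_{L^{p}(\Rn)}^{p}\ud t\Big)^{1/p}.
\]

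Next I would invoke Corollary \ref{cor:wave} with the value $s'=s+d(p)-s(p)+\veps$ in place of $s$: since $d(p)-s(p)=s(p)-1/p$ when $p\geq 2(n+1)/(n-1)$ and $=0$ otherwise, one checks that in both cases $s'+\frac{n-1}{2}(\frac12-\frac1p)=s+d(p)+\veps$, so that the exponent $2^{k(s'+\frac{n-1}{2}(\frac12-\frac1p))}$ appearing in Corollary \ref{cor:wave} matches exactly the factor $2^{k(s+d(p)+\veps)}$ produced by decoupling. Corollary \ref{cor:wave} then gives
\[
2^{k(s+d(p)+\veps)}\Big(\sum_{\nu\in\Theta_{k}}\int_{\R}\|\chi_{\nu}(D)g(t)e^{it\phi(D)}f_{k}\|_{L^{p}(\Rn)}^{p}\ud t\Big)^{1/p}\eqsim \|f_{k}\|_{\HT^{s',p}_{FIO}(\Rn)},
\]
with implied constants uniform in $k$. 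Combining the last two displays yields the desired bound for each dyadic piece: $\big(\int_{0}^{1}\|e^{it\phi(D)}f_{k}\|_{W^{s,p}}^{p}\ud t\big)^{1/p}\lesssim \|f_{k}\|_{\HT^{s',p}_{FIO}(\Rn)}$.

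Finally I would sum over $k$. On the left, Littlewood--Paley theory on $L^{p}(\Rn^{n+1})$ (for $2<p<\infty$, where one controls a function by the $\ell^{p}$-sum of its dyadic pieces using $p\geq2$, or more carefully via the square function and Minkowski) reduces the estimate for $f$ to the $\ell^{p}$-sum $\big(\sum_{k}\big(\int_{0}^{1}\|e^{it\phi(D)}f_{k}\|_{W^{s,p}}^{p}\ud t\big)\big)^{1/p}$. On the right, $\big(\sum_{k}\|f_{k}\|_{\HT^{s',p}_{FIO}}^{p}\big)^{1/p}\lesssim\|f\|_{\HT^{s',p}_{FIO}}$ by the Littlewood--Paley characterization of the Sobolev spaces over $\Hp$ (which follows from the $L^{p}$/Hardy-space Littlewood--Paley theory and the definition of $\HT^{s,p}_{FIO}$, noting the decomposition commutes with the radial cutoffs $\ph_{\w}(D)$). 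The low-frequency term (where $q(D)f\neq0$) is handled separately: there $e^{it\phi(D)}$ is a Fourier multiplier with a symbol that is $L^{1}$-bounded after integrating by parts, so it maps $L^{p}$ to $L^{p}$ boundedly with polynomial-in-$t$ norm, and the contribution is dominated by $\|q(D)f\|_{L^{p}}\lesssim\|f\|_{\HT^{s',p}_{FIO}}$ using \eqref{eq:Sobolev}.

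The main obstacle I anticipate is the bookkeeping needed to let the $k$-sum converge and to pass between $f$ and its dyadic pieces cleanly: decoupling is only available on a fixed annulus, so one must absorb the $2^{k\veps}$ factor against the decay coming from a genuine gain in the exponent. The trick is that the target norm $\HT^{s+d(p)-s(p)+\veps,p}_{FIO}$ already carries $\veps$ derivatives to spare, so $\|f_{k}\|_{\HT^{s+d(p)-s(p)+\veps,p}_{FIO}}\eqsim 2^{k\veps}\|f_{k}\|_{\HT^{s+d(p)-s(p),p}_{FIO}}$ and the geometric factor $2^{k\veps}$ in the decoupling estimate is matched rather than lost; summing in $\ell^{p}$ then costs nothing. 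The rest — verifying the exponent arithmetic $d(p)-s(p)=\max(0,s(p)-1/p)$, the reduction $\int_{0}^{1}\leq\int_{\R}|g|^{p}$, and the uniformity of constants in $k$ in Corollary \ref{cor:wave} — is routine given the results already established, and Theorem \ref{thm:localsmooth} follows by taking $s=0$ and $\phi(\xi)=|\xi|$.
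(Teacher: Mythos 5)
Your single-annulus argument is correct and is exactly the paper's: apply Proposition \ref{prop:decoupling} on $\{2^{k-1}\leq|\xi|\leq 2^{k+1}\}$, then Corollary \ref{cor:wave} with parameter $s''=s+d(p)-s(p)+\veps$, and check that $s''+\tfrac{n-1}{2}(\tfrac12-\tfrac1p)=s+d(p)+\veps$ so the powers of $2^{k}$ cancel. The gap is in the reassembly over $k$. You assert that for $2<p<\infty$ ``Littlewood--Paley theory on $L^{p}(\R^{n+1})$ reduces the estimate for $f$ to the $\ell^{p}$-sum $\big(\sum_{k}\int_{0}^{1}\|e^{it\phi(D)}f_{k}\|_{W^{s,p}}^{p}\,\ud t\big)^{1/p}$,'' i.e.\ that a function with dyadic pieces $u_k$ satisfies $\|u\|_{L^{p}}\lesssim\big(\sum_{k}\|u_{k}\|_{L^{p}}^{p}\big)^{1/p}$. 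For $p>2$ this inequality goes the wrong way: since $\|u\|_{L^{p}}\eqsim\|(\sum_{k}|u_{k}|^{2})^{1/2}\|_{L^{p}}$ and, pointwise, $(\sum_{k}|u_{k}|^{p})^{1/p}\leq(\sum_{k}|u_{k}|^{2})^{1/2}$ when $p\geq2$, one in fact has $\big(\sum_{k}\|u_{k}\|_{L^{p}}^{p}\big)^{1/p}\lesssim\|u\|_{L^{p}}$, not the reverse. The version of the inequality that does hold for $p\geq2$ via the square function and Minkowski is $\|u\|_{L^{p}}\lesssim\big(\sum_{k}\|u_{k}\|_{L^{p}}^{2}\big)^{1/2}$, but then the required companion bound on the right, $\big(\sum_{k}\|f_{k}\|_{\HT^{s',p}_{FIO}(\Rn)}^{2}\big)^{1/2}\lesssim\|f\|_{\HT^{s',p}_{FIO}(\Rn)}$, fails for $p>2$. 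So neither $\ell^{p}$- nor $\ell^{2}$-summation closes the argument.

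Your closing paragraph actually identifies the right mechanism -- use the $\veps$ of spare regularity to generate geometric decay in $k$ -- but your written summation contradicts it. What the paper does, and what you should do, is apply the single-annulus estimate with $\veps$ replaced by $\veps/2$, giving
\[
\Big(\int_{0}^{1}\|e^{it\phi(D)}f_{k}\|_{L^{p}(\Rn)}^{p}\,\ud t\Big)^{1/p}\lesssim\|f_{k}\|_{\HT^{s+d(p)-s(p)+\veps/2,p}_{FIO}(\Rn)}\eqsim 2^{-k\veps/2}\|f_{k}\|_{\HT^{s+d(p)-s(p)+\veps,p}_{FIO}(\Rn)},
\]
and then sum by the triangle inequality (an $\ell^{1}$ sum, not $\ell^{p}$):
\[
\Big(\int_{0}^{1}\|e^{it\phi(D)}f\|_{L^{p}(\Rn)}^{p}\,\ud t\Big)^{1/p}\leq\sum_{k}\Big(\int_{0}^{1}\|e^{it\phi(D)}f_{k}\|_{L^{p}(\Rn)}^{p}\,\ud t\Big)^{1/p}\lesssim\sum_{k}2^{-k\veps/2}\|f\|_{\HT^{s+d(p)-s(p)+\veps,p}_{FIO}(\Rn)},
\]
using that the Littlewood--Paley projections are bounded on $\HT^{r,p}_{FIO}(\Rn)$ uniformly in $k$. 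With this fix your reduction to $s=0$, your treatment of the low-frequency term, and your exponent arithmetic are all fine, and the proof coincides with the paper's.
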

\begin{proof}
We may consider the case where $s=0$. We also claim that we may suppose that $\supp(\wh{f}\,)\subseteq\{\xi\in\Rn\mid |\xi|\in[2^{k-1},2^{k+1}]\}$ for some $k\in\Z_{+}$, and show that
\begin{equation}\label{eq:toprove}
\Big(\int_{0}^{1}\|e^{it\phi(D)}f\|_{L^{p}(\Rn)}^{p}\ud t\Big)^{1/p}\lesssim \|f\|_{\HT^{d(p)-s(p)+\veps,p}_{FIO}(\Rn)}.
\end{equation}
Indeed, suppose that we have proved this, and let a general $f\in\HT^{d(p)-s(p)+\veps,p}_{FIO}(\Rn)$ be given. Let $q'\in C^{\infty}_{c}(\Rn)$ and $\kappa>0$ be such that $q(\xi)=0$ for $|\xi|> \kappa$, and $q'(\xi)=1$ for $|\xi|\leq \kappa$. For each $t\in(0,1)$, write $f=q'(tD)q'(D)q(D)f+(1-q)(D)f$. The kernels of $q'(D)e^{i\phi(D)}$ and $q'(D)\lb D\rb^{-d(p)+s(p)-\veps}$ are integrable, so 
\begin{align*}
&\Big(\int_{0}^{1}\|e^{it\phi(D)}q'(tD)q'(D)q(D)f\|_{L^{p}(\Rn)}^{p}\ud t\Big)^{1/p}\\
&\leq \sup_{t\in(0,1)}\|e^{it\phi(D)}q'(tD)q'(D)q(D)f\|_{L^{p}(\Rn)}\lesssim \|q'(D)q(D)f\|_{L^{p}(\Rn)}\\
&\lesssim \|\lb D\rb^{d(p)-s(p)+\veps}q(D)f\|_{L^{p}(\Rn)}\leq  \|f\|_{\HT^{d(p)-s(p)+\veps,p}_{FIO}(\Rn)}.
\end{align*}
On the other hand, since $q(\xi)=1$ for $|\xi|\leq 2$, we can use a standard Littlewood--Paley decomposition to write $(1-q)(D)f=\sum_{k=3}^{\infty}\psi(2^{-k+1}D)(1-q)(D)f$. Here $\psi\in C^{\infty}_{c}(\Rn)$ satisfies $\sum_{k=-\infty}^{\infty}\psi(2^{-k+1}\xi)=1$ for all $\xi\neq 0$, and $\psi(\xi)=0$ if $|\xi|\notin [1/2,2]$. The convolution kernels of the $\psi(2^{-k+1}D)(1-q)(D)$ form a uniformly bounded subset of $L^{1}(\Rn)$, so
\[
\|\psi(2^{-k+1}D)(1-q)(D)\lb D\rb^{d(p)-s(p)+\veps}f\|_{\HT^{p}_{FIO}(\Rn)}\lesssim \|\lb D\rb^{d(p)-s(p)+\veps}f\|_{\HT^{p}_{FIO}(\Rn)}
\]
for an implicit constant independent of $k\in\Z_{+}$ and $f$ (see \cite[Theorem 6.1]{FaLiRoSo19}). Moreover, for all $k\in\Z_{+}$, $r\in\R$ and $h\in\Hp$, one has 
\[
\|\psi(2^{-k+1}D)h\|_{\HT^{r,p}_{FIO}(\Rn)}\eqsim 2^{kr}\|\psi(2^{-k+1}D)h\|_{\Hp},
\]
as follows from the definition of $\Hp$ and from a  standard fact about the $L^{p}(\Rn)$ norm of functions with frequency support in a dyadic annulus.
Hence, using the assumption with $\veps$ replaced by $\veps/2$, we obtain
\begin{align*}
&\Big(\int_{0}^{1}\|e^{it\phi(D)}(1-q)(D)f\|_{L^{p}(\Rn)}^{p}\ud t\Big)^{1/p}\\
&\leq \sum_{k=3}^{\infty}\Big(\int_{0}^{1}\|e^{it\phi(D)}\psi(2^{-k+1}D)(1-q)(D)f\|_{L^{p}(\Rn)}^{p}\ud t\Big)^{1/p}\\
&\lesssim \sum_{k=3}^{\infty}\|\psi(2^{-k+1}D)(1-q)(D)f\|_{\HT^{d(p)-s(p)+\veps/2,p}_{FIO}(\Rn)}\\
&\eqsim \sum_{k=3}^{\infty}2^{-k\veps/2}\|\psi(2^{-k+1}D)(1-q)(D)f\|_{\HT^{d(p)-s(p)+\veps,p}_{FIO}(\Rn)}\\
&\lesssim \sum_{k=3}^{\infty}2^{-k\veps/2}\|f\|_{\HT^{d(p)-s(p)+\veps,p}_{FIO}(\Rn)}\lesssim \|f\|_{\HT^{d(p)-s(p)+\veps,p}_{FIO}(\Rn)}.
\end{align*}
This proves the claim, and it remains to establish \eqref{eq:toprove} whenever $\supp(\wh{f}\,)\subseteq\{\xi\in\Rn\mid |\xi|\in[2^{k-1},2^{k+1}]\}$ for some $k\in\Z_{+}$.

Let the partition of unity $(\chi_{\nu})_{\nu\in\Theta_{k}}$ be as above. Also let $g\in\Sw(\R)$ be such that $|g(t)|\geq1$ for $t\in[0,1]$, and $\wh{g}(\tau)=0$ for $\tau\notin[-1,1]$. Then Proposition \ref{prop:decoupling} yields
\begin{align*}
\Big(\int_{0}^{1}\|e^{it\phi(D)}f\|_{L^{p}(\Rn)}^{p}\ud t\Big)^{1/p}&\leq \Big(\int_{\R}\Big\|\sum_{\nu\in\Theta_{k}}g(t)\chi_{\nu}(D)e^{it\phi(D)}f\Big\|_{L^{p}(\Rn)}^{p}\ud t\Big)^{1/p}\\
&\lesssim 2^{k(d(p)+\veps)}\Big(\!\sum_{\nu\in\Theta_{k}}\int_{\R}\|\chi_{\nu}(D)g(t)e^{it\phi(D)}f\|_{L^{p}(\Rn)}^{p}\ud t\Big)^{1/p}
\end{align*}
for implicit constants independent of $f$ and $k$. By Corollary \ref{cor:wave}, the final quantity is equivalent to $\|f\|_{\HT^{d(p)-s(p)+\veps,p}_{FIO}(\Rn)}$, as required. 
\end{proof}

\subsection{Invariance of estimates}

As an immediate corollary of Theorem \ref{thm:localsmooth}, we obtain the invariance of local smoothing estimates under FIOs and related operators. Indeed, under the assumptions of Theorem \ref{thm:localsmoothmain}, if $T$ is a bounded operator on $\HT^{s+d(p)-s(p)+\veps,p}_{FIO}(\Rn)$, then Theorem \ref{thm:localsmooth} yields a $C'\geq0$ such that
\begin{equation}\label{eq:bddest}
\Big(\int_{0}^{1}\|e^{it\phi(D)}Tf\|_{W^{s,p}(\Rn)}^{p}\ud t\Big)^{1/p}\leq C'\|f\|_{\HT^{s+d(p)-s(p)+\veps,p}_{FIO}(\Rn)}
\end{equation}
for all $f\in \HT^{s+d(p)-s(p)+\veps,p}_{FIO}(\Rn)$. Now Proposition \ref{prop:operator} yields the following result.

\begin{corollary}\label{cor:bddest}
Let $p\in(2,\infty)$ and $s\in\R$, and let $\phi\in C^{\infty}(\Rn\setminus\{0\})$ be positively homogeneous of degree $1$ and such that $\rank(\partial_{\xi\xi}^{2}\phi)\equiv n-1$. Let $T$ be an operator as in Proposition \ref{prop:operator}, where in \eqref{it:operator3} we suppose instead that $-2+2s(p)\leq s+d(p)+\veps\leq 2$. Then, for each $\veps>0$, there exists a $C'>0$ such that \eqref{eq:bddest} holds for all $f\in \HT^{s+d(p)-s(p)+\veps,p}_{FIO}(\Rn)$.
\end{corollary}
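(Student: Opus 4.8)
The plan is to derive Corollary \ref{cor:bddest} as a direct combination of Theorem \ref{thm:localsmoothmain} with the boundedness statements in Proposition \ref{prop:operator}, exactly along the lines sketched in the paragraph preceding the statement. First I would observe that Theorem \ref{thm:localsmoothmain} gives, for each $\veps>0$, a constant $C>0$ with
\[
\Big(\int_{0}^{1}\|e^{it\phi(D)}h\|_{W^{s,p}(\Rn)}^{p}\ud t\Big)^{1/p}\leq C\|h\|_{\HT^{s+d(p)-s(p)+\veps,p}_{FIO}(\Rn)}
\]
for all $h\in\HT^{s+d(p)-s(p)+\veps,p}_{FIO}(\Rn)$. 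Applying this with $h=Tf$ and combining with the bound $\|Tf\|_{\HT^{s+d(p)-s(p)+\veps,p}_{FIO}(\Rn)}\leq\|T\|_{\La(\HT^{s+d(p)-s(p)+\veps,p}_{FIO}(\Rn))}\|f\|_{\HT^{s+d(p)-s(p)+\veps,p}_{FIO}(\Rn)}$ immediately yields \eqref{eq:bddest} with $C'=C\|T\|_{\La(\HT^{s+d(p)-s(p)+\veps,p}_{FIO}(\Rn))}$, \emph{provided} $T$ is indeed bounded on the space $\HT^{s+d(p)-s(p)+\veps,p}_{FIO}(\Rn)$.

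The remaining point is therefore to check that each class of operators listed in Proposition \ref{prop:operator} is bounded on $\HT^{r,p}_{FIO}(\Rn)$ with the specific exponent $r=s+d(p)-s(p)+\veps$. For cases \eqref{it:operator1} and \eqref{it:operator2} of Proposition \ref{prop:operator} there is nothing further to verify, since the boundedness there holds on $\HT^{r,p}_{FIO}(\Rn)$ for \emph{every} $r\in\R$ and $p\in[1,\infty]$, in particular for $r=s+d(p)-s(p)+\veps$. For case \eqref{it:operator3}, Proposition \ref{prop:operator} requires $1<p<\infty$, $2s(p)<1$, and $-2+s(p)\leq r\leq 2-s(p)$; here $p\in(2,\infty)$ is already assumed, so I only need the constraint on $r$. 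Writing $r=s+d(p)-s(p)+\veps$ and using $d(p)-s(p)=\max(0,s(p)-1/p)\geq0$ together with $d(p)-s(p)\le s(p)$ (equivalently $d(p)\le 2s(p)$, which holds since $d(p)=2s(p)-1/p$ in the relevant range and $d(p)=s(p)\le 2s(p)$ otherwise), one sees that the hypothesis $-2+2s(p)\leq s+d(p)+\veps\leq 2$ imposed in the statement guarantees $-2+s(p)\leq r\leq 2-s(p)$: indeed $r=s+d(p)+\veps-s(p)\leq 2-s(p)$ from the upper bound, and $r=(s+d(p)+\veps)-s(p)\geq(-2+2s(p))-s(p)=-2+s(p)$ from the lower bound. (One should also note $2s(p)<1$ is implicit: it is needed for Proposition \ref{prop:operator}\eqref{it:operator3} to apply at all, and in the intended reading of the corollary this is part of the hypothesis inherited from that proposition; if one prefers, state it explicitly.) Thus in all three cases $T$ is bounded on $\HT^{s+d(p)-s(p)+\veps,p}_{FIO}(\Rn)$.

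Putting these together: fix $\veps>0$, let $C$ be the constant from Theorem \ref{thm:localsmoothmain} for this $\veps$, let $C_T:=\|T\|_{\La(\HT^{s+d(p)-s(p)+\veps,p}_{FIO}(\Rn))}$ (finite by the previous paragraph), and set $C':=CC_T$; then \eqref{eq:bddest} follows. I do not anticipate any genuine obstacle here — the content is entirely in Theorem \ref{thm:localsmoothmain} and Proposition \ref{prop:operator}, which may be invoked freely — and the only place that requires a moment of care is the bookkeeping in the previous paragraph, verifying that the adjusted range condition on $s+d(p)+\veps$ in case \eqref{it:operator3} translates correctly into the range condition $-2+s(p)\leq s+d(p)-s(p)+\veps\leq 2-s(p)$ needed to apply Proposition \ref{prop:operator}\eqref{it:operator3}.
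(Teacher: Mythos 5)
Your proof is correct and follows exactly the paper's intended argument: combine Theorem \ref{thm:localsmoothmain} applied to $Tf$ with the boundedness of $T$ on $\HT^{s+d(p)-s(p)+\veps,p}_{FIO}(\Rn)$ coming from Proposition \ref{prop:operator}, with the bookkeeping check that the modified range hypothesis $-2+2s(p)\le s+d(p)+\veps\le 2$ is precisely the condition $-2+s(p)\le r\le 2-s(p)$ at $r=s+d(p)-s(p)+\veps$ required for case \eqref{it:operator3}. The aside about $d(p)-s(p)\le s(p)$ is unnecessary for the verification, but everything is sound.
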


In particular, by combining \eqref{eq:bddest} with \eqref{eq:Sobolev}, one sees that the local smoothing estimates in \eqref{eq:conj2} are invariant under application of the operators in Corollary \ref{cor:bddest}. 

\begin{remark}\label{rem:variable}
A variable-coefficient version of \eqref{eq:conj2} holds on compact manifolds \cite{BeHiSo20}. This implies in particular that \eqref{eq:conj2} is invariant under application of FIOs associated with a local canonical graph which have a compactly supported Schwartz kernel, as in Proposition \ref{prop:operator} \eqref{it:operator1}. However, by itself this does not imply the stronger estimate \eqref{eq:bddest} for such operators. In this article we will not explore variable-coefficient versions of Theorem \ref{thm:localsmoothmain}.
\end{remark}

\section{Sharpness of the results}\label{sec:sharpness}

In this section we show that the exponent $d(p)-s(p)$ in Theorems \ref{thm:localsmooth} and \ref{thm:localsmoothmain} is sharp. We also prove a proposition which deals with the case where $p\in[1,2]\cup\{\infty\}$. 

We need two lemmas. The first one relates certain FIOs to translation operators, and will be applied to functions with frequency support in a dyadic-parabolic piece. A similar statement holds for more general FIOs, using bicharacteristic flows on phase space (see \cite{Hassell-Rozendaal20}), but we will not need such generality here.

\begin{lemma}\label{lem:translation}
Let $\phi\in C^{\infty}(\Rn\setminus\{0\})$ be positively homogeneous of degree $1$, and let $h\in \Sw(\Rn)$ be such that $\supp(\wh{h})\subseteq\Rn\setminus\{0\}$ is compact. Let $\nu\in S^{n-1}$ and set 
\[
\kappa(\phi,h,\nu):=\sup\{|(\partial_{\xi}\phi(\hat{\xi})-\partial_{\xi}\phi(\nu))\cdot\xi|\mid \xi\in\supp(\wh{h})\}.
\]
 Then
\[
|e^{it\phi(D)}h(x)-h(x+t\partial_{\xi}\phi(\nu))|\leq \|\wh{h}\|_{L^{1}(\Rn)}\kappa(\phi,h,\nu)|t|
\]
for all $x\in\Rn$ and $t\in\R$ such that $\kappa(\phi,h,\nu)|t|\leq 1$.
\end{lemma}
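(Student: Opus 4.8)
The plan is to prove this by a direct Fourier-analytic estimate, comparing the two operators on the Fourier side. Writing $h(x) = (2\pi)^{-n}\int_{\Rn} e^{ix\cdot\xi}\wh{h}(\xi)\,\ud\xi$ (with whatever normalization the paper uses), we have
\[
e^{it\phi(D)}h(x) = (2\pi)^{-n}\int_{\Rn} e^{ix\cdot\xi}e^{it\phi(\xi)}\wh{h}(\xi)\,\ud\xi,
\]
while
\[
h(x+t\partial_{\xi}\phi(\nu)) = (2\pi)^{-n}\int_{\Rn} e^{ix\cdot\xi}e^{it\partial_{\xi}\phi(\nu)\cdot\xi}\wh{h}(\xi)\,\ud\xi.
\]
Subtracting, the difference is $(2\pi)^{-n}\int_{\Rn} e^{ix\cdot\xi}\big(e^{it\phi(\xi)}-e^{it\partial_{\xi}\phi(\nu)\cdot\xi}\big)\wh{h}(\xi)\,\ud\xi$, so by the triangle inequality
\[
|e^{it\phi(D)}h(x)-h(x+t\partial_{\xi}\phi(\nu))| \leq (2\pi)^{-n}\int_{\Rn}\big|e^{it\phi(\xi)}-e^{it\partial_{\xi}\phi(\nu)\cdot\xi}\big|\,|\wh{h}(\xi)|\,\ud\xi.
\]
(If the paper's Fourier normalization omits the $(2\pi)^{-n}$, I adjust accordingly; the point is only that the bound is controlled by $\|\wh{h}\|_{L^1}$ times a sup of the exponential difference over $\supp(\wh h)$.)

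The key step is then to estimate the exponential difference pointwise for $\xi\in\supp(\wh h)$. Since $\phi$ is positively homogeneous of degree $1$, Euler's identity gives $\phi(\xi) = \partial_{\xi}\phi(\xi)\cdot\xi = \partial_{\xi}\phi(\hat\xi)\cdot\xi$ (as $\partial_{\xi}\phi$ is homogeneous of degree $0$), so $\phi(\xi) - \partial_{\xi}\phi(\nu)\cdot\xi = (\partial_{\xi}\phi(\hat\xi)-\partial_{\xi}\phi(\nu))\cdot\xi$. Hence
\[
|t\phi(\xi) - t\partial_{\xi}\phi(\nu)\cdot\xi| = |t|\,|(\partial_{\xi}\phi(\hat\xi)-\partial_{\xi}\phi(\nu))\cdot\xi| \leq |t|\,\kappa(\phi,h,\nu)
\]
for $\xi\in\supp(\wh h)$, by definition of $\kappa(\phi,h,\nu)$. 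Using the elementary inequality $|e^{ia}-e^{ib}| \leq |a-b|$ for real $a,b$, we get $|e^{it\phi(\xi)}-e^{it\partial_{\xi}\phi(\nu)\cdot\xi}| \leq |t|\kappa(\phi,h,\nu)$ on $\supp(\wh h)$. Plugging this into the integral bound yields
\[
|e^{it\phi(D)}h(x)-h(x+t\partial_{\xi}\phi(\nu))| \leq |t|\,\kappa(\phi,h,\nu)\,(2\pi)^{-n}\int_{\Rn}|\wh{h}(\xi)|\,\ud\xi = \|\wh{h}\|_{L^1(\Rn)}\,\kappa(\phi,h,\nu)\,|t|,
\]
as claimed. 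Note the hypothesis $\kappa(\phi,h,\nu)|t|\leq 1$ is not actually needed for this particular bound, but it is harmless to keep it; it presumably matters for how the lemma is applied (e.g. to guarantee the bound is nontrivial, $\leq \|\wh h\|_{L^1}$).

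There is essentially no serious obstacle here: the only mild subtlety is making sure $\phi$ and $\partial_\xi\phi$ are well-defined and smooth on $\supp(\wh h)\subseteq\Rn\setminus\{0\}$, which is guaranteed since $\supp(\wh h)$ is a compact subset avoiding the origin and $\phi\in C^\infty(\Rn\setminus\{0\})$; in particular $\kappa(\phi,h,\nu)<\infty$. One should also confirm that $e^{it\phi(D)}h$ is genuinely given by the stated oscillatory integral — this is immediate since $\wh h$ is compactly supported away from $0$, so $e^{it\phi(\cdot)}\wh h\in C^\infty_c(\Rn\setminus\{0\})$ and the integral converges absolutely. I would present the argument in the three displayed steps above: (i) write the difference as an inverse Fourier integral, (ii) apply Euler's homogeneity identity plus $|e^{ia}-e^{ib}|\leq|a-b|$ to bound the integrand on $\supp(\wh h)$, (iii) integrate.
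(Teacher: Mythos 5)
Your proof is correct and follows essentially the same route as the paper: pass to the Fourier side, use Euler's identity for degree-$1$ homogeneity to write $\phi(\xi)=\partial_\xi\phi(\hat\xi)\cdot\xi$, and bound the phase difference pointwise on $\supp(\wh h)$ before integrating against $|\wh h|$. The one genuine (if minor) divergence is the choice of elementary inequality: the paper factors out $e^{it\partial_\xi\phi(\nu)\cdot\xi}$ and applies $|e^{z}-1|\leq|z|e^{|z|}$, which is why it then needs the hypothesis $\kappa(\phi,h,\nu)|t|\leq1$ (together with $(2\pi)^{-n}e\leq1$) to absorb the factor $e^{|t|\kappa}$ into the constant; you instead use the sharper bound $|e^{ia}-e^{ib}|\leq|a-b|$ valid for real $a,b$, which makes the hypothesis $\kappa(\phi,h,\nu)|t|\leq1$ superfluous, exactly as you observe. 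Your argument thus proves a marginally stronger statement with a cleaner constant; the only cosmetic slip is that your final display writes an equality where, with the paper's normalization (as its proof uses the prefactor $(2\pi)^n$), it should be the inequality $(2\pi)^{-n}\|\wh h\|_{L^1(\Rn)}\kappa(\phi,h,\nu)|t|\leq\|\wh h\|_{L^1(\Rn)}\kappa(\phi,h,\nu)|t|$, which is harmless.
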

\begin{proof}
We use the homogeneity of $\phi$ to write
\begin{align*}
&(2\pi)^{n}\big(e^{it\phi(D)}h(x)-h(x+t\partial_{\xi}\phi(\nu))\big)=\int_{\Rn}e^{ix\cdot\xi}(e^{it\phi(\xi)}-e^{it\partial_{\xi}\phi(\nu)\cdot\xi})\wh{h}(\xi)\ud\xi\\
&=\int_{\Rn}e^{ix\cdot\xi}e^{it\partial_{\xi}\phi(\nu)\cdot\xi}(e^{it(\partial_{\xi}\phi(\hat{\xi})-\partial_{\xi}\phi(\nu))\cdot\xi}-1)\wh{h}(\xi)\ud\xi.
\end{align*}
Now simply note that, in the last line, the term in brackets is bounded in absolute value by $|t|\kappa(\phi,h,\nu)e^{|t|\kappa(\phi,h,\nu)}$.
\end{proof}

Next, to prove sharpness for $2<p<2(n+1)/(n-1)$, we will use the following lemma, which relates the Hardy spaces for FIOs to square functions similar to those in \cite{GuWaZh20}. Throughout, we use notation as in Section \ref{sec:main}.

\begin{lemma}\label{lem:squarefun}
Let $\phi\in C^{\infty}(\Rn\setminus\{0\})$ be positively homogeneous of degree $1$, and let $p\in[1,\infty)$, $s\in\R$ and $C\geq0$ be such that
\begin{equation}\label{eq:assump}
\Big(\int_{0}^{1}\|e^{it\phi(D)}f\|_{L^{p}(\Rn)}^{p}\ud t\Big)^{1/p}\leq C\|f\|_{\Hps}
\end{equation}
for all $f\in\Hps$. Then there exists a $C'\geq0$ such that
\[
\Big(\int_{0}^{1}\int_{\Rn}\Big(\sum_{\nu\in\Theta_{k}}|\chi_{\nu}(D)e^{it\phi(D)}f(x)|^{2}\Big)^{p/2}\ud x\ud t\Big)^{1/p}\leq C'\|f\|_{\Hps}
\]
for all $f\in\Hps$ such that $\supp(\wh{f}\,)\subseteq \{\xi\in\Rn\mid 2^{k-1}\leq |\xi|\leq 2^{k+1}\}$ for some $k\in\Z_{+}$.
\end{lemma}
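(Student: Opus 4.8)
The plan is to deduce the square-function estimate from \eqref{eq:assump} by a randomization argument, exploiting the fact that $f$ lives on a single dyadic shell, where the discrete decomposition $(\chi_\nu)_{\nu\in\Theta_k}$ interacts well with the Hardy space norm via Proposition \ref{prop:equivalent}. First I would introduce Rademacher variables $(r_\nu)_{\nu\in\Theta_k}$ on a probability space, and for a choice of signs $r=(r_\nu)$ consider the modified datum $f_r:=\sum_{\nu\in\Theta_k}r_\nu\,\chi_\nu(D)f$. Since each $\chi_\nu(D)f$ has frequency support in the dyadic-parabolic piece $\{2^{k-1}\leq|\xi|\leq 2^{k+1},\ |\hat\xi-\nu|\leq 2^{-k/2+1}\}$, Proposition \ref{prop:equivalent} (applied to both $f$ and $f_r$, whose frequency pieces have comparable $L^p$ norms since $|r_\nu|=1$) gives $\|f_r\|_{\Hps}\eqsim\|f\|_{\Hps}$ uniformly in $r$. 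Hence \eqref{eq:assump} applies to each $f_r$ with a uniform constant.

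Next I would apply \eqref{eq:assump} to $f_r$, take $p$-th powers, and average over the signs $r$ using Fubini:
\[
\int_0^1\int_{\Rn}\mathbb{E}_r\Big|\sum_{\nu\in\Theta_k}r_\nu\,\chi_\nu(D)e^{it\phi(D)}f(x)\Big|^{p}\ud x\,\ud t\lesssim \mathbb{E}_r\|f_r\|_{\Hps}^{p}\lesssim\|f\|_{\Hps}^{p}.
\]
Here I use that $e^{it\phi(D)}$ is a Fourier multiplier, so it commutes with $\chi_\nu(D)$ and with the summation over $\nu$, giving $e^{it\phi(D)}f_r=\sum_\nu r_\nu\chi_\nu(D)e^{it\phi(D)}f$. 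By Khintchine's inequality, for each fixed $(t,x)$ one has
\[
\mathbb{E}_r\Big|\sum_{\nu\in\Theta_k}r_\nu\,\chi_\nu(D)e^{it\phi(D)}f(x)\Big|^{p}\eqsim\Big(\sum_{\nu\in\Theta_k}|\chi_\nu(D)e^{it\phi(D)}f(x)|^{2}\Big)^{p/2},
\]
and combining the last two displays yields the claimed bound.

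The main point to be careful about — the step I expect to require the most attention — is the uniform equivalence $\|f_r\|_{\Hps}\eqsim\|f\|_{\Hps}$. This is where one genuinely needs $f$ to be supported on a single dyadic shell: Proposition \ref{prop:equivalent} identifies $\|f\|_{\Hps}$ (up to the factor $2^{k(s+\frac{n-1}{2}(\frac12-\frac1p))}$) with $\big(\sum_\nu\|\chi_\nu(D)f\|_{L^p}^p\big)^{1/p}$, and since multiplying the $\nu$-th piece by a sign $r_\nu$ does not change its $L^p$ norm, and since $\chi_\nu(D)f_r=r_\nu\chi_\nu(D)f$ because the $\chi_\nu$ partition frequency space with bounded overlap, the discrete norms of $f$ and $f_r$ coincide exactly; hence the two $\Hps$ norms are comparable with a constant depending only on the constant $C$ of Proposition \ref{prop:equivalent} (in particular independent of $k$ and of $r$). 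One should also note $q(D)f_r=0$ for $k$ large, exactly as in the proof of Proposition \ref{prop:equivalent}, and handle small $k$ trivially since then everything is a bounded finite sum. With this uniformity in hand, the randomization and Khintchine steps are routine.
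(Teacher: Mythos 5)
Your proof matches the paper's argument: introduce Rademacher signs, set $f_\veps=\sum_{\nu}\veps_\nu\chi_\nu(D)f$, use Proposition \ref{prop:equivalent} to compare $\|f_\veps\|_{\Hps}$ with $\|f\|_{\Hps}$, apply \eqref{eq:assump} to $f_\veps$, average, and finish with Khintchine's inequality. One small correction to your justification of the norm comparison: the identity $\chi_\nu(D)f_r=r_\nu\chi_\nu(D)f$ is false because the $\chi_{\nu'}$ have overlapping supports; rather $\chi_\nu(D)f_r=\sum_{\nu'\,:\,\supp\chi_{\nu'}\cap\supp\chi_\nu\neq\emptyset}r_{\nu'}\chi_\nu(D)\chi_{\nu'}(D)f$ is a sum of boundedly many terms, and since the $\chi_\nu(D)$ have uniformly bounded $L^1$ kernels this still gives $\sum_\nu\|\chi_\nu(D)f_r\|_{L^p(\Rn)}^p\lesssim\sum_\nu\|\chi_\nu(D)f\|_{L^p(\Rn)}^p$, hence $\|f_r\|_{\Hps}\lesssim\|f\|_{\Hps}$ uniformly in $r$, which is the only direction actually used when invoking \eqref{eq:assump}.
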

\begin{proof}
Let $(\veps_{\nu})_{\nu\in\Theta_{k}}$ be an independent sequence of Rademacher random variables, and for $x\in\Rn$ write 
\[
f_{\veps}(x):=\sum_{\nu\in\Theta_{k}}\veps_{\nu}\chi_{\nu}(D)f(x).
\]
Then Proposition \ref{prop:equivalent} shows that $f_{\veps}\in\Hps$ with $\|f_{\veps}\|_{\Hps}\eqsim \|f\|_{\Hps}$. Now one simply combines \eqref{eq:assump} with Khintchine's inequality.
\end{proof}

We can now state the main result of this section, the proof of which relies on examples that are typically used to show sharpness of decoupling inequalities (see e.g. \cite{GuWaZh20,Wolff00}). These examples were communicated to us by Po--Lam Yung.

\begin{theorem}\label{thm:sharp}
Let $\phi\in C^{\infty}(\Rn\setminus\{0\})$ be positively homogeneous of degree $1$. Let $p\in(2,\infty)$, $s\in\R$ and $C\geq0$ be such that
\begin{equation}\label{eq:sharp}
\Big(\int_{0}^{1}\|e^{it\phi(D)}f\|_{L^{p}(\Rn)}^{p}\ud t\Big)^{1/p}\leq C\|f\|_{\HT^{s,p}_{FIO}(\Rn)}
\end{equation}
for all $f\in \HT^{s,p}_{FIO}(\Rn)$. Then $s\geq d(p)-s(p)$.
\end{theorem}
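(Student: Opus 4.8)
The plan is to show that \eqref{eq:sharp} forces $s \geq d(p)-s(p)$ by testing against two families of examples, each calibrated to one of the two regimes in the definition of $d(p)$, and in each case concentrated on a single dyadic frequency annulus $\{2^{k-1}\leq|\xi|\leq 2^{k+1}\}$ so that Lemma \ref{lem:dyadicpar} and Proposition \ref{prop:equivalent} give precise control of the $\HT^{s,p}_{FIO}$ norm of the initial datum. By the invariance afforded by Proposition \ref{prop:equivalent} we only need $\phi$ with $\rank(\partial^2_{\xi\xi}\phi) \equiv n-1$ for the relevant estimates, but the statement as given allows arbitrary homogeneous $\phi$ of degree $1$; I will handle the cone case first and remark at the end that the degenerate case follows by a simpler lower-dimensional version of the same examples (or is vacuous since $d(p)-s(p)$ can only be harder to beat).

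\emph{Step 1: the regime $p \geq 2(n+1)/(n-1)$, where $d(p)-s(p) = s(p)-1/p$.} Here I take $f$ to be a single wave packet adapted to one dyadic-parabolic plate: $\widehat{f}$ supported in $\{|\xi|\in[2^{k-1},2^{k+1}],\ |\hat\xi-\nu|\leq 2^{-k/2}\}$, normalized so $\|f\|_{L^p}=1$. By Lemma \ref{lem:translation}, on the time interval $[0,1]$ the propagator $e^{it\phi(D)}$ moves this packet essentially rigidly by $t\,\partial_\xi\phi(\nu)$ up to an error $O(\kappa(\phi,f,\nu)|t|) = O(2^{k/2}\cdot 2^{-k/2}|t|) = O(|t|)$ wait --- more carefully, $\kappa(\phi,f,\nu) \lesssim 2^{-k/2}\cdot 2^k = 2^{k/2}$, so the translation approximation is only good for $|t|\lesssim 2^{-k/2}$. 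On that shorter time interval the $L^p(\Rn)$ norm of $e^{it\phi(D)}f$ stays $\eqsim 1$, so the left side of \eqref{eq:sharp} is $\gtrsim 2^{-k/(2p)}$. Meanwhile Lemma \ref{lem:dyadicpar}\eqref{it:sobeq2} gives $\|f\|_{\HT^{s,p}_{FIO}} \eqsim 2^{k(s+s(p))}\|f\|_{L^p} = 2^{k(s+s(p))}$. Letting $k\to\infty$ forces $-\tfrac{1}{2p} \geq s+s(p)$? That is not quite the right bound, so the single-packet example must be upgraded: instead one superposes $\sim 2^{k(n-1)/2}$ packets, one on each plate $\nu\in\Theta_k$, with random signs, exactly as in $f_\veps$ from Lemma \ref{lem:squarefun}; Khintchine together with the fact that the translated packets $e^{it\phi(D)}\chi_\nu(D)f$ have essentially disjoint supports for a positive-measure set of $t$ lets one compute both sides sharply, reproducing the known sharp example for $\ell^p$-decoupling over the cone and yielding $s+s(p) \geq d(p)$, i.e. $s\geq d(p)-s(p)$.

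\emph{Step 2: the regime $2 < p < 2(n+1)/(n-1)$, where $d(p)-s(p)=0$, so we must show $s\geq 0$.} Here Lemma \ref{lem:squarefun} is the key input: if \eqref{eq:sharp} held with some $s<0$ then, combined with Lemma \ref{lem:dyadicpar}\eqref{it:sobeq2}, we would get a square-function estimate
\[
\Big(\int_0^1\!\!\int_{\Rn}\Big(\sum_{\nu\in\Theta_k}|\chi_\nu(D)e^{it\phi(D)}f(x)|^2\Big)^{p/2}\ud x\,\ud t\Big)^{1/p} \lesssim 2^{k(s+s(p))}\|f\|_{L^p}
\]
with a power $s+s(p) < s(p)$ that is too small. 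One then tests this against the focusing/Knapp example used to show sharpness of the square-function conjecture in \cite{GuWaZh20}: a datum $f$ on the $k$-th annulus, $\|f\|_{L^p}=1$, whose pieces $\chi_\nu(D)f$ all focus at a common space-time point, making the square function large on a full plank and producing the lower bound $2^{k\cdot s(p)}$ (up to $\veps$-losses which we can absorb since we have $\varepsilon$-room, or simply compare powers of $2^k$). This contradicts $s+s(p)<s(p)$ for $k$ large, hence $s\geq 0 = d(p)-s(p)$.

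\emph{Main obstacle.} The routine part is the bookkeeping of $2^k$ powers via Lemma \ref{lem:dyadicpar}; the delicate part is verifying that the chosen examples genuinely saturate both sides \emph{simultaneously} on a single dyadic annulus, in particular controlling the propagator $e^{it\phi(D)}$ over the full unit time interval rather than just the short interval $|t|\lesssim 2^{-k/2}$ where Lemma \ref{lem:translation} applies directly --- this is where one must invoke the genuine curvature hypothesis on $\phi$ (for the decoupling-sharpness example) and the structure of the focusing example from \cite{GuWaZh20} (for the square-function example), and it is also where, for degenerate $\phi$, one restricts to the non-degenerate directions and runs the same argument in lower dimension. I expect writing out Step 2 cleanly --- matching the $\varepsilon$-loss in the square-function example against the clean estimate we are trying to contradict --- to be the step that needs the most care.
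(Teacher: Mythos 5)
Your overall framework is the right one --- test \eqref{eq:sharp} against two families of data concentrated on a single dyadic annulus, use Lemma \ref{lem:translation} to control $e^{it\phi(D)}$ on short time scales, use Lemma \ref{lem:squarefun} for the square-function reduction, and compute $\HT^{s,p}_{FIO}$ norms via Proposition \ref{prop:equivalent}. But both of your specific constructions miss the key mechanism and would not yield the claimed exponent.

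In Step 1 you propose superposing one packet per plate \emph{with random signs} and invoking Khintchine, then claiming the translated packets are essentially disjoint for a positive-measure set of $t$. Neither randomization nor disjointness is the right input here, and in fact each destroys the exponent you need: random signs replace the sum by the square function (losing the factor $2^{k(n-1)/4}$), and disjoint supports reduce the $L^p$ norm of the sum to the $\ell^p$ norm of the pieces (losing even more). Run the numbers and you get $s\geq -\tfrac{n+1}{2p}$ or $s\geq -s(p)$, both strictly weaker than $s\geq s(p)-\tfrac1p$. The correct example uses \emph{deterministic constructive interference}: choose each $f_\nu$ real and $\geq 1$ on a ball of radius $2^{-k}$, apply Lemma \ref{lem:translation} only on the tiny time interval $0\leq t\lesssim 2^{-k}$ (not $2^{-k/2}$), so that all $2^{k(n-1)/2}$ propagated packets are simultaneously $\gtrsim 1$ on a space-time region of volume $\eqsim 2^{-k(n+1)}$. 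That gives the lower bound $2^{-k(n+1)/p}\,2^{k(n-1)/2}$ for the left side of \eqref{eq:sharp}, and comparison with $\|f\|_{\HT^{s,p}_{FIO}}\eqsim 2^{k(s+\frac{n-1}{2}(\frac12-\frac1p)-\frac1p)}$ (Proposition \ref{prop:equivalent}) yields exactly $s\geq s(p)-\tfrac1p$.

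In Step 2 the general direction (Lemma \ref{lem:squarefun} plus a datum whose square function is large) is right, but the details are off. First, the display you wrote with right-hand side $2^{k(s+s(p))}\|f\|_{L^p}$ misuses Lemma \ref{lem:dyadicpar}\eqref{it:sobeq2}, which applies only to $f$ supported in a \emph{single} dyadic-parabolic piece, not on a whole annulus; for annular $f$ one must use Proposition \ref{prop:equivalent}, giving $2^{k(s+\frac{n-1}{2}(\frac12-\frac1p))}(\sum_\nu\|\chi_\nu(D)f\|_{L^p}^p)^{1/p}$. Second, the GWZ focusing example (tiny wave packets, one per plate, all focusing at a point) is precisely the example already used in Step 1; it does not produce a $2^{ks(p)}$ lower bound against $\|f\|_{L^p}=1$ in the square-function estimate, which is known to be essentially lossless in $2^k$. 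The right example for $s\geq 0$ is instead a superposition of \emph{unit-scale} modulated bumps $g_\nu(x)=e^{i2^k\nu\cdot x}\psi(x)$, one per plate, all centered at the origin. Each $g_\nu$ has $\|g_\nu\|_{L^p}\eqsim 1$, so $\|g\|_{\HT^{s,p}_{FIO}}\eqsim 2^{k(s+\frac{n-1}{4})}$, while Lemma \ref{lem:translation} (now with $\kappa\lesssim 1$, so valid for $0\leq t\lesssim 1$) gives $|e^{it\phi(D)}g_\nu|\gtrsim 1$ on a unit space-time ball, hence a square-function lower bound $\gtrsim 2^{k(n-1)/4}$. Comparing forces $s\geq 0$.

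Finally, your "main obstacle" paragraph worries about controlling $e^{it\phi(D)}$ over all of $[0,1]$ and about a curvature hypothesis on $\phi$. Neither is needed: the short-time translation approximation suffices (one restricts to $t\lesssim 2^{-k}$ in Step 1 and $t\lesssim 1$ in Step 2, and the positive-measure time window is enough), and the theorem deliberately makes no rank assumption on $\partial^2_{\xi\xi}\phi$ --- only positive homogeneity and the translation Lemma \ref{lem:translation} are used. So the degenerate case requires no separate reduction.
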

\begin{proof}
We want to prove $s\geq d(p)-s(p)=\max(s(p)-\frac{1}{p},0)$. We first show that $s\geq s(p)-\frac{1}{p}$. To this end we apply Lemma \ref{lem:translation} in the case where the frequency support of the relevant function fills up a complete dyadic-parabolic piece.

More precisely, for $k\in\Z_{+}$, let $(f_{\nu})_{\nu\in\Theta_{k}}\subseteq \Sw(\Rn)$ be a collection with the following properties. For each $\nu\in\Theta_{k}$ one has $\chi_{\nu}(D)f_{\nu}=f_{\nu}$, and $f_{\nu}(x)\geq 1$ for all $x\in\Rn$ with $|x|\leq 2^{-k}$. Moreover, $\|\F(f_{\nu})\|_{L^{1}(\Rn)}\eqsim 1$ and $\|f_{\nu}\|_{L^{p}(\Rn)}\eqsim 2^{-k\frac{n+1}{2p}}$ for implicit constants independent of $\nu$ and $k$. To construct such a collection, one may start off with a function $\psi\in \Sw(\Rn)$ such that $\psi(x)\geq1$ whenever $|x|\leq 1$, and $\wh{\psi}(\xi)=0$ if $|\xi|>c$, for some small $c>0$. Then set
\[
f_{\nu}(x):=e^{i2^{k}\nu\cdot x}\psi(2^{k}\nu\cdot x+2^{k/2}\Pi_{\nu}^{\perp}x)
\] 
where $\Pi^{\perp}_{\nu}$ is the orthogonal projection onto the complement of the span of $\nu$. 

Next, let $f:=\sum_{\nu\in\Theta_{k}}f_{\nu}$. Then Proposition \ref{prop:equivalent} yields $f\in\Hps$ and
\begin{equation}\label{eq:HpFIObound}
\|f\|_{\Hps}\eqsim 2^{k(s+\frac{n-1}{2}(\frac{1}{2}-\frac{1}{p}))}\Big(\sum_{\nu\in\Theta_{k}}\|f_{\nu}\|_{L^{p}(\Rn)}^{p}\Big)^{1/p}\eqsim 2^{k(s+\frac{n-1}{2}(\frac{1}{2}-\frac{1}{p})-\frac{1}{p})}.
\end{equation}
Moreover, Lemma \ref{lem:translation} and the assumptions on $f_{\nu}$ yield 
\begin{equation}\label{eq:translatebound}
|e^{it\phi(D)}f_{\nu}(x)-f_{\nu}(x+t\partial_{\xi}\phi(\nu))|\lesssim  t2^{-k/2}
\end{equation}
whenever $0\leq t\lesssim 2^{-k/2}$, for implicit constants independent of $\nu$ and $k$. Also, $f_{\nu}(x+t\partial_{\xi}\phi(\nu))\geq1$ if $t\leq 2^{-k}$ and $|x|\leq c'2^{-k}$ for some small $c'>0$, since $f_{\nu}(x)\geq1$ for $|x|\leq 2^{-k}$ and because $\partial_{\xi}\phi$ is uniformly bounded on $S^{n-1}$. By combining this with \eqref{eq:translatebound}, we obtain
\[
e^{it\phi(D)}f_{\nu}(x)\geq f_{\nu}(x+t\partial_{\xi}\phi(\nu))-|e^{it\phi(D)}f_{\nu}(x)-f_{\nu}(x+t\partial_{\xi}\phi(\nu))|\gtrsim 1
\]
for $|x|\leq c'2^{-k}$ and $0\leq t\leq c''2^{-k}$ for some small enough $c''>0$. Now \eqref{eq:sharp} and \eqref{eq:HpFIObound} yield
\begin{align*}
2^{-k\frac{n+1}{p}}2^{k\frac{n-1}{2}}&\lesssim \Big(\int_{0}^{1}\Big\|\sum_{\nu\in\Theta_{k}}e^{it\phi(D)}f_{\nu}\Big\|_{L^{p}(\Rn)}^{p}\ud t\Big)^{1/p}=\Big(\int_{0}^{1}\|e^{it\phi(D)}f\|_{L^{p}(\Rn)}^{p}\ud t\Big)^{1/p}\\
&\lesssim \|f\|_{\HT^{s,p}_{FIO}(\Rn)}\eqsim 2^{k(s+\frac{n-1}{2}(\frac{1}{2}-\frac{1}{p})-\frac{1}{p})},
\end{align*}
which implies that $s\geq s(p)-\frac{1}{p}$.

Next, we show that $s\geq 0$. Here we apply Lemma \ref{lem:translation} to functions with frequency support of unit size in a dyadic-parabolic piece, and we also rely on Lemma \ref{lem:squarefun}. Let $\psi\in\Sw(\Rn)$ be as before, and for $k\in\Z_{+}$, $\nu\in\Theta_{k}$ and $x\in\Rn$ set
\[
g_{\nu}(x):=e^{i2^{k}\nu\cdot x}\psi(x).
\]
Since $\wh{\psi}(\xi)=0$ if $|\xi|>c$, for some small $c>0$, we may again assume that $\chi_{\nu}(D)g_{\nu}=g_{\nu}$. Also, $\|g_{\nu}\|_{L^{p}(\Rn)}\eqsim 1\eqsim \|\F(g_{\nu})\|_{L^{1}(\Rn)}$. 

Write $g:=\sum_{\nu\in\Theta_{k}}g_{\nu}$. Then $g\in\Hps$ with
\begin{equation}\label{eq:HpFIObound2}
\|g\|_{\Hps}\eqsim 2^{k(s+\frac{n-1}{2}(\frac{1}{2}-\frac{1}{p}))}\Big(\sum_{\nu\in\Theta_{k}}\|g_{\nu}\|_{L^{p}(\Rn)}^{p}\Big)^{1/p}\eqsim 2^{k(s+\frac{n-1}{4})},
\end{equation}
by Proposition \ref{prop:equivalent}. Moreover, since $\supp(\F g_{\nu})\subseteq \supp(\chi_{\nu})$ is of unit size, Lemma \ref{lem:translation} yields
\[
\big|e^{it\phi(D)}g_{\nu}(x)-g_{\nu}(x+t\partial_{\xi}\phi(\nu))\big|\lesssim  t
\]
whenever $0\leq t\leq \kappa$, for some small $\kappa>0$ independent of $k$ and $\nu$. As before, one also has $g_{\nu}(x+t\partial_{\xi}\phi(\nu))\geq 1$ whenever $|x|\lesssim 1$ and $0\leq t\lesssim 1$ are small enough. Hence
\[
e^{it\phi(D)}g_{\nu}(x)\geq g_{\nu}(x+t\partial_{\xi}\phi(\nu))-\big|e^{it\phi(D)}g_{\nu}(x)-g_{\nu}(x+t\partial_{\xi}\phi(\nu))\big|\gtrsim 1
\]
for $|x|\leq \kappa'$ and $0\leq t\leq \kappa''$, for small enough $\kappa',\kappa''>0$. Now combine this with \eqref{eq:sharp}, Lemma \ref{lem:squarefun} and \eqref{eq:HpFIObound2} to obtain
\begin{align*}
2^{k\frac{n-1}{4}}&\lesssim \Big(\int_{0}^{1}\int_{\Rn}\Big(\sum_{\nu\in\Theta_{k}}|e^{it\phi(D)}g_{\nu}(x)|^{2}\Big)^{p/2}\ud x\ud t\Big)^{1/p}\\
&=\Big(\int_{0}^{1}\int_{\Rn}\Big(\sum_{\nu\in\Theta_{k}}|\chi_{\nu}(D)e^{it\phi(D)}g(x)|^{2}\Big)^{p/2}\ud x\ud t\Big)^{1/p}\lesssim \|g\|_{\Hps}
\\
&\eqsim 2^{k(s+\frac{n-1}{4})},
\end{align*}
which implies that $s\geq0$.
\end{proof}

\begin{remark}\label{rem:decoupling}
Recall that Theorem \ref{thm:localsmoothmain} relied on the $\ell^{p}$-decoupling inequality from \cite{Bourgain-Demeter17}, for the hypersurface associated with $(e^{it\phi(D)})_{t\in\R}$, in Proposition \ref{prop:decoupling}. As a byproduct, Theorem \ref{thm:sharp} shows that each of these decoupling inequalities is sharp.
\end{remark}

\begin{remark}\label{rem:nosmooth}
To prove the inequality $s\geq s(p)-\frac{1}{p}$ in Theorem \ref{thm:sharp}, i.e.~deal with the case where $p\geq 2(n+1)/(n-1)$, we cannot simply rely on the fact that the local smoothing conjecture is sharp. Indeed, while such an approach suffices for $(e^{it\sqrt{-\Delta}})_{t\in\R}$, or more generally under the conditions of Theorem \ref{thm:localsmoothmain}, Theorem \ref{thm:sharp} does not make any assumptions on the rank of $\partial_{\xi\xi}^{2}\phi$, and the $L^{p}$-mapping properties of $e^{it\phi(D)}$ improve when $\rank(\partial_{\xi\xi}^{2}\phi)$ is small (see \cite{SeSoSt91}). See also Remark \ref{rem:nodisp}.
\end{remark}

For completeness we also include the following proposition, for $p\notin (2,\infty)$.

\begin{proposition}\label{prop:otherp}
Let $\phi\in C^{\infty}(\Rn\setminus\{0\})$ be positively homogeneous of degree $1$. Let $p\in[1,2]\cup \{\infty\}$ and $s\in\R$. Then the following are equivalent:
\begin{enumerate}
\item\label{it:otherp1} One has $[t\mapsto e^{it\phi(D)}f]\in L^{p}([0,1];\HT^{p}(\Rn))$ for all $f\in \HT^{s,p}_{FIO}(\Rn)$;
\item\label{it:otherp2} $s\geq s(p)$.
\end{enumerate}  
\end{proposition}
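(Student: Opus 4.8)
The plan is to show that each of \eqref{it:otherp1} and \eqref{it:otherp2} is equivalent to the continuous embedding $\HT^{s,p}_{FIO}(\Rn)\hookrightarrow\HT^{p}(\Rn)$, and that this embedding holds precisely when $s\geq s(p)$. For \eqref{it:otherp2}$\Rightarrow$\eqref{it:otherp1}: if $s\geq s(p)$ then $\HT^{s,p}_{FIO}(\Rn)\hookrightarrow\HT^{s(p),p}_{FIO}(\Rn)$ (the negative-order multiplier $\lb D\rb^{s(p)-s}$ is bounded on $\Hp$, having an $L^1(\Rn)$ convolution kernel, cf.\ \cite[Theorem 6.1]{FaLiRoSo19}), and by the extension of \eqref{eq:Sobolev} to $p\in[1,\infty]$ from \cite[Theorem 7.4]{HaPoRo20} one has $\Hp\hookrightarrow\lb D\rb^{s(p)}\HT^{p}(\Rn)$, so $\HT^{s(p),p}_{FIO}(\Rn)=\lb D\rb^{-s(p)}\Hp\hookrightarrow\HT^{p}(\Rn)$. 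Combined with Corollary \ref{cor:halfwave}, which gives $\sup_{t\in[0,1]}\|e^{it\phi(D)}f\|_{\Hps}\lesssim\|f\|_{\Hps}$, this yields $\sup_{t\in[0,1]}\|e^{it\phi(D)}f\|_{\HT^{p}(\Rn)}\lesssim\|f\|_{\Hps}$; together with the continuity of $t\mapsto e^{it\phi(D)}f$ into $\Sw'(\Rn)$ (and into $\Hps$, by density of $\Sw(\Rn)$, when $p<\infty$), this gives \eqref{it:otherp1}.

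For \eqref{it:otherp1}$\Rightarrow$\eqref{it:otherp2}, the closed graph theorem turns \eqref{it:otherp1} into a bound $\|e^{it\phi(D)}f\|_{L^{p}([0,1];\HT^{p}(\Rn))}\leq C\|f\|_{\Hps}$ for all $f$. When $p\in[1,2]$, fix a nonzero $g\in\Ccinf(\R)$ with $\supp(g)\subseteq[0,1]$, and let $g_{k}\in\Sw(\Rn)$ have $\widehat{g_{k}}$ a fixed nonzero bump supported in one dyadic-parabolic piece $\{|\xi|\in[2^{k-1},2^{k+1}],\ |\widehat\xi-\nu|\leq 2^{-k/2}\}$ with $\nu\in\Theta_{k}$. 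Since $\Theta_{k}$ is $2^{-k/2}$-separated only boundedly many $\chi_{\mu}(D)$ interact with the frequency support of $g_{k}$, and these operators are uniformly bounded on $L^{p}(\Rn)$, so $\sum_{\mu\in\Theta_{k}}\int_{\R}\|\chi_{\mu}(D)g(t)e^{it\phi(D)}g_{k}\|_{L^{p}(\Rn)}^{p}\ud t\eqsim\int_{0}^{1}|g(t)|^{p}\|e^{it\phi(D)}g_{k}\|_{L^{p}(\Rn)}^{p}\ud t$. Then Corollary \ref{cor:wave} (using $\tfrac{n-1}{2}(\tfrac12-\tfrac1p)=-s(p)$) gives $\|g_{k}\|_{\Hps}\eqsim 2^{k(s-s(p))}\big(\int_{0}^{1}|g(t)|^{p}\|e^{it\phi(D)}g_{k}\|_{L^{p}(\Rn)}^{p}\ud t\big)^{1/p}$. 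Bounding $|g|\leq\|g\|_{L^{\infty}}$, inserting the closed-graph estimate (and $\HT^{1}(\Rn)\hookrightarrow L^{1}(\Rn)$ when $p=1$), and dividing by $\|g_{k}\|_{\Hps}$ gives $1\lesssim 2^{k(s-s(p))}$ for all $k$, hence $s\geq s(p)$.

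When $p=\infty$ a single dyadic-parabolic piece is no longer extremal and one needs a focusing example. For $f\in\Sw(\Rn)$ the map $t\mapsto e^{it\phi(D)}f=\F^{-1}(e^{it\phi}\widehat{f})$ is continuous into $L^{\infty}(\Rn)$, hence into $\bmo(\Rn)$, so evaluating the closed-graph estimate at $t=0$ gives $\|f\|_{\bmo(\Rn)}\leq C\|f\|_{\HT^{s,\infty}_{FIO}(\Rn)}$ for all $f\in\Sw(\Rn)$. Apply this to $g=\sum_{\nu\in\Theta_{k}}e^{i2^{k}\nu\cdot x}\psi(x)$ as in the proof of Theorem \ref{thm:sharp} (with $\widehat{\psi}$ supported near $0$ and $\psi(0)\neq0$, so that $\widehat{g}$ is supported in $\{2^{k-1}\leq|\xi|\leq2^{k+1}\}$): Proposition \ref{prop:equivalent} gives $\|g\|_{\HT^{s,\infty}_{FIO}(\Rn)}\eqsim 2^{k(s+\frac{n-1}{4})}$, while $\|g\|_{\bmo(\Rn)}\eqsim\|g\|_{L^{\infty}(\Rn)}\geq|g(0)|\eqsim\#\Theta_{k}\eqsim 2^{k(n-1)/2}$, the first $\eqsim$ since $g$ has Fourier support in a dyadic annulus (cf.\ the proof of Lemma \ref{lem:dyadicpar}). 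Hence $2^{k(n-1)/2}\lesssim 2^{k(s+\frac{n-1}{4})}$ for all $k$, forcing $s\geq\frac{n-1}{4}=s(\infty)$.

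The main obstacle is \eqref{it:otherp1}$\Rightarrow$\eqref{it:otherp2}: the naive route of extracting a fixed-time bound from \eqref{it:otherp1} fails, since Lemma \ref{lem:translation} controls $e^{it\phi(D)}$ on a dyadic-parabolic cap only for $|t|\lesssim 2^{-k/2}$, which costs a power of $2^{k}$ when integrated in $t$. This is bypassed, for $p\leq2$, by the uniformity already present in Corollary \ref{cor:wave}, and for $p=\infty$ by the fact that the essential supremum over $t\in[0,1]$ sees the value at $t=0$; and the split $p\leq2$ versus $p=\infty$ is genuine, because a single dyadic-parabolic cap is extremal for $\HT^{s,p}_{FIO}(\Rn)\hookrightarrow\HT^{p}(\Rn)$ only in the range $p\leq2$.
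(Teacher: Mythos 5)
Your proof is correct and follows the paper's overall strategy. The direction \eqref{it:otherp2}$\Rightarrow$\eqref{it:otherp1} and the $p\leq 2$ case of \eqref{it:otherp1}$\Rightarrow$\eqref{it:otherp2} match the paper in essence: for a single dyadic-parabolic packet $f$, Lemma \ref{lem:dyadicpar} together with the uniform boundedness of $(e^{it\phi(D)})_{t\in[0,1]}$ on $\Hps$ gives $\|e^{it\phi(D)}f\|_{\HT^{p}(\Rn)}\eqsim 2^{k(s(p)-s)}\|f\|_{\Hps}$ for each $t\in[0,1]$, and integrating in $t$ yields the conclusion; your detour through Corollary \ref{cor:wave} arrives at the same place by a slightly longer route. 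The genuine deviation is at $p=\infty$. The paper invokes the first part of the proof of Theorem \ref{thm:sharp}, which uses Lemma \ref{lem:translation} to propagate the full-cap wave packets $f_\nu$ and then lower-bounds $\|e^{it\phi(D)}f\|_{\bmo(\Rn)}$ on a positive-measure set of small times. You instead extract the fixed-time bound at $t=0$: since $t\mapsto e^{it\phi(D)}f$ is continuous into $\bmo(\Rn)$ for Schwartz $f$ with frequency support away from the origin, the essential supremum over $t\in[0,1]$ sees $t=0$, and the claim reduces to the sharpness of the embedding $\HT^{s,\infty}_{FIO}(\Rn)\hookrightarrow\bmo(\Rn)$, which you verify directly with the unit-scale focusing example $g=\sum_{\nu}e^{i2^{k}\nu\cdot x}\psi(x)$. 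This dispenses with Lemma \ref{lem:translation} entirely and is a genuine simplification. Your closing observation about why the two ranges call for different test functions is also accurate: a single cap is extremal for the Sobolev embedding only when $p\leq 2$, while the $t=0$ extraction from an $L^{p}$-in-time bound is only available when $p=\infty$.
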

\begin{proof}
By Corollary \ref{cor:wave} and the Sobolev embeddings in \eqref{eq:Sobolev}, one has
\begin{equation}\label{eq:trivial}
[t\mapsto e^{it\phi(D)}]\in C(\R;\HT^{s(p),p}_{FIO}(\Rn))\subseteq L^{p}([0,1];\HT^{p}(\Rn))
\end{equation}
for all $1\leq p\leq \infty$ and $f\in\HT^{s(p),p}_{FIO}(\Rn)$, which proves the implication \eqref{it:otherp2}$\Rightarrow$\eqref{it:otherp1}. 

For $p=\infty$, the converse implication follows directly from the first part of the proof of Theorem \ref{thm:sharp}, up to a change of notation. For $p\leq 2$, let $f\in\Hps$ be such that
\[
\supp(\wh{f}\,)\subseteq\{\xi\in\Rn\mid 2^{k-1}\leq |\xi|\leq 2^{k+1}, |\hat{\xi}-\w|\leq 2^{-k/2+1}\}
\]
for some $k\in\Z_{+}$ and $\w\in S^{n-1}$. Then Corollary \ref{cor:wave} and Lemma \ref{lem:dyadicpar} yield
\[
2^{k(s(p)-s)}\|f\|_{\Hps}\eqsim 2^{k(s(p)-s)}\|e^{i\phi(D)}f\|_{\Hps}\eqsim \|e^{it\phi(D)}f\|_{\HT^{p}(\Rn)},
\]
for any $t\in[0,1]$. Now integrate this in time and apply \eqref{it:otherp1} to obtain
\[
2^{k(s(p)-s)}\|f\|_{\Hps}\eqsim \Big(\int_{0}^{1}\|e^{it\phi(D)}f\|_{\HT^{p}(\Rn)}^{p}\ud t\Big)^{1/p}\lesssim \|f\|_{\Hps},
\]
so that $s\geq s(p)$.
\end{proof}

As in Remark \ref{rem:nosmooth}, Proposition \ref{prop:otherp} does not follow in full generality from the fact that no local smoothing can occur for $p\notin (2,\infty)$.

\begin{remark}\label{rem:nodisp}
We briefly summarize what we have shown. Let $1\leq p\leq \infty$ and $s\in\R$, and let $\phi\in C^{\infty}(\Rn\setminus\{0\})$ be positively homogeneous of degree $1$. By \eqref{eq:trivial}, one has
\begin{equation}\label{eq:summarize}
[t\mapsto e^{it\phi(D)}f]\in L^{p}([0,1];\HT^{p}(\Rn))\text{ for all } f\in\Hps
\end{equation}
whenever $s\geq s(p)$. Because $(e^{it\phi(D)})_{t\in\R}$ is a group, there can be no improvement of the index in the first inclusion in \eqref{eq:trivial}, not even at a single time. Moreover, the condition $s\geq s(p)$ is also necessary for \eqref{eq:summarize} in some situations, for example when $p\in[1,2]\cup\{\infty\}$ or when $\phi\equiv0$, by Proposition \ref{prop:otherp} and because the Sobolev embeddings in \eqref{eq:Sobolev} are sharp. On the other hand, for $p\in(2,\infty)$, the condition $s\geq d(p)-s(p)$ is necessary for any $\phi$, while $s>d(p)-s(p)$ suffices when $\rank(\partial_{\xi\xi}^{2}\phi)$ is maximal everywhere. 
\end{remark}

\section*{Acknowledgments}

This work arose from conversations with Po-Lam Yung, who made various suggestions that helped improve it significantly. The author is very grateful to him for sharing his ideas, without which this article would not have been written. The author would also like to thank Robert Schippa and the anonymous referee for useful comments.

\bibliographystyle{plain}
\bibliography{Bibliography}

\end{document}